\newcommand{\gl}{\mathrm{gl}}
\newcommand{\Au}{\mathrm{Aut}}
\newcommand{\ad}{\mathrm{ad}}
\newcommand{\Ad}{\mathrm{Ad}}
\newcommand{\K}{\mathrm{K}}
\newcommand{\T}{\mathrm{Tr}}
\newcommand{\af}{\mathrm{aff}}
\newcommand{\Af}{\mathrm{Aff}}
\newcommand{\G}{\mathcal{G}}
\newcommand{\h}{\mathfrak{h}}
\newcommand{\X}{\mathfrak{X}}
\newcommand{\Rank}{\operatorname{rank}}
\newcommand{\Lnone}{$\mathrm{Lie} \left(n, 1\right)$}
\newcommand{\Li}{$\mathrm{Lie} \left(n, 2\right)$}
\newcommand{\Lnk}{$\mathrm{Lie} \left(n, k\right)$}
\newcommand{\N}{\mathbb{N}}
\newcommand{\R}{\mathbb{R}}
\newcommand{\C}{\mathbb{C}}
\newcommand{\s}{\mathrm{span}}
\newcommand{\ve}{\vert}
\begin{document}

\setcounter{page}{1}

\markboth{Tu T. C. Nguyen and Vu A. Le}{Representation of {\Li} and geometry of $\K$-orbits}

\doi{--} 

\catchline{--}{--}{--}{--}{--}

\title{\uppercase{Representation of Real Solvable Lie Algebras Having 2-dimensional Derived Ideal and Geometry of Coadjoint Orbits of Corresponding Lie Groups}}

\author{TU T. C. NGUYEN}

\address{ Faculty of Mathematics and Computer Science\\            
University of Science -- Vietnam National University\\
Ho Chi Minh City, Vietnam;\\
College of Natural Sciences --
Can Tho University\\
Can Tho City, Vietnam\\
camtu@ctu.edu.vn}

\author{VU A. LE}

\address{Department of Economic Mathematics\\         
University of Economics and Law -- Vietnam National University\\
Ho Chi Minh City, Vietnam\\
vula@uel.edu.vn}

\maketitle

\begin{history}
\end{history}

\begin{abstract}
Let {\Lnk} be the class of all $n$-dimensional real solvable Lie algebras having $k$-dimensional derived ideals. In 2020 the authors et al. gave a classification of all non 2-step nilpotent Lie algebras of {\Li}. We propose in this paper to study representations of these Lie algebras as well as their corresponding connected and simply connected Lie groups. That is, for each algebra, we give an upper bound of the minimal degree of a faithful representation. Then, we give a geometrical description of coadjoint orbits of corresponding groups. Moreover, we show that the characteristic property of the family of maximal dimensional coadjoint orbits of a MD-group studied by K. P. Shum and the second author et al. is still true for the Lie groups considered here. Namely, we prove that, for each considered group, the family of the maximal dimensional coadjoint orbits forms a measurable foliation in the sense of Connes. The topological classification of these foliations is also provided. 
\end{abstract}

\keywords{Lie algebra; representation; coadjoint representation.}

\ccode{AMS Subject Classification: 16G99, 17B10, 17B08, 17B80, 53C12}

\section{Introduction}\label{sec:1}

The problem of classifying all solvable Lie algebras in an arbitrarily large finite dimension is presently unsolved and is generally believed to be unsolvable. All known full classifications terminate at relatively low dimensions or contain a certain specific given property such as classifying solvable Lie algebras with low-dimensional derived ideal introduced immediately below.

Denote by {\Lnk} the class of all $n$-dimensional real solvable Lie algebras having $k$-dimensional derived ideals. Several classifications of {\Lnk} with $k \in \{1,2\}$ have been investigated. More precisely, a complete classification of {\Lnone} and a partial classification of {\Li} were given by Sch\"obel \cite{Sch93} in 1993. Eberlein \cite{Ebe03} in 2003 gave a formal classification of 2-step nilpotent Lie algebras including 2-step nilpotent ones in {\Li}. Janisse \cite{Jan10} in 2010 gave a partial classification of {\Li} over an algebraically closed field. In 2020, the authors et al. \cite{Le20} presented a new approach to the problem of classifying {\Li} and gave a list of non 2-step nilpotent Lie algebras in {\Li}.

This paper is concerned with the issue of studying representations of Lie algebras in {\Li} and can be considered as a continuation of \cite{Le20}.

As is well known, the Ado-Iwasawa theorem \cite{Ado35,Iwa48} asserts that every finite-dimensional Lie algebra has a faithful representation of finite degree. It is well known that the degree of a faithful representation can be arbitrarily large. It can not be arbitrarily small however. Concretely, it is bounded from below by the square root of the dimension of the Lie algebra. Denote by $\mu(\G)$ the minimal degree of a faithful representation of a Lie algebra $\G$. In general, it is very difficult to determine $\mu(\G)$ and it is also hard to obtain suitable estimations for $\mu(\G)$.  

On the other hand, the orbit method, which is also known as the Kirillov theory, establishes a correspondence between irreducible unitary representations of a Lie group and its coadjoint orbits (see \cite{Kir04}). The theory was first introduced by A. A. Kirillov in the early 1960's and remains a useful and powerful tool in representation theory of Lie groups. It is obvious that coadjoint orbits play an important role in the orbit method. So it is worthwhile to study coadjoint orbits.

We consider in this paper the indecomposable Lie algebras listed in \cite{Le20}. Besides constructing faithful representations of small degree for a family of them, we carry out a survey of coadjoint orbits of all corresponding Lie groups. 
We hope that some beautiful properties of MD-groups studied by K. P. Shum and the second author et al. in \cite{Vu-Shum,Vu90,V93,VH09,VHT14} can be generalized for all considered Lie groups of general dimension $n \geq 5$. 

The main results of the paper are as follows. First, we give in Theorem \ref{thm14} the upper bound of $\mu(\G)$ for each Lie algebra $\G$ under consideration. After that, we describe in Theorem \ref{thm20} the geometry of all coadjoint orbits of connected and simply connected Lie groups corresponding to these algebras. Next, Theorem \ref{FormedFoliation} shows that, for each considered group of general dimension $n \geq 5$, the family of the maximal dimensional coadjoint orbits forms a measurable foliation in the sense of Connes in \cite{Con82}. Finally, the topological classification of these foliations is provided in Theorem \ref{TopoTypes} and their Connes' $C^*$-algebras are described in Corrolary \ref{C312}.  

The paper is organized into four sections including this introduction. Section \ref{sec:2} recalls some notions and useful results. All main results are presented and proved in Section \ref{sec:3}. Finally, Section \ref{sec:4} is some concluding remarks.

\section{Preliminaries}\label{sec:2}

Throughout this paper, the notation $\G$ will mean a finite-dimensional solvable Lie algebra over the field $\R$ of real numbers and $\G^*$ will denote the dual space of $\G$. By $G$ we always mean the connected and simply connected Lie group such that its Lie algebra is $\G$. Some more notations are shown below:

\begin{itemlist}
	\item For any $n$-dimensional real vector space $V$ ($0 < n \in \N$), $\gl(V)$ is the Lie algebra of all endomorphisms of $V$, $\Au(V)$ is the Lie group of all automorphisms of $V$.	
	\item With a choosen basis $\{ e_1, \ldots, e_n \}$ in $V$ and a arbitrary vector $v \in V$, the notation $v(x_1, \ldots, x_n)$ means that the coordinates of $v$ with respect to $\{ e_1, \ldots, e_n \}$ are exactly $(x_1, \ldots, x_n)$.
	\item $\R^n$ is the \emph{real abelian Lie algebra} of dimension $n$.
	\item $\af (\R) = \s \{X_1, X_2\}$ is the \emph{real affine Lie algebra} with $[X_1, X_2] = X_2$. This is also the Lie algebra of the group Aff($\R$) of all affine transformations on $\R$.
	\item $\af(\C) = \s \{X_1, X_2, X_3, X_4\}$ is the \emph{complex affine Lie algebra} 
	with non-trivial Lie brackets: $[X_3,X_1] = -X_2, [X_3,X_2] = [X_4,X_1] = X_1, [X_4,X_2] = X_2$. This is also the Lie algebra of both the group Aff($\C$) of all affine transformations on $\C$ and the simply connected cover $\widetilde{\Af(\C)}$ of Aff($\C$).
	\item $\h_{2m+1} = \s \left\lbrace X_1, X_2, \ldots, X_{2m+1} \right\rbrace$ is the \emph{$(2m+1)$-dimensional real Heisenberg Lie algebra} ($m \geq 1$) with non-trivial Lie brackets: $[X_i,X_{m+i}] = X_{2m+1}$ for all $i = 1, \ldots, m$. 	
\end{itemlist}

\subsection{Representations and $\K$-orbits}

In this subsection, we recall some notions and well-known results about representations which will be used later. For more details, we refer the reader to the references \cite{Kir76,Kir04} of A. A. Kirillov.

\begin{definition}
   A \emph{linear representation} (or \emph{representation}) of $\G$ on a vector space $V$ is a Lie algebra homomorphism $\rho: \G \to$ $\gl(V), \, X (\in \G) \mapsto\rho_X \in \gl(V)$. If $V$ is finite-dimensional, then the representation is called \emph{finite-dimensional} and the dimension of $V$ is called the \emph{degree} of this representation. The representation $\rho$ is called \emph{faithful} if it is injective.
\end{definition}

\begin{definition}
   Given a Lie algebra $\G$, then $\mu(\G)$ is defined to be the minimal dimension of $V$ such that $\G$ admits a faithful representation on $V$.
\end{definition}

\begin{remark}\label{rem10} It is easy to check the following assertions.
	\begin{romanlist}[(ii)]
		\item If $\rho: \G \to \gl(V)$ and $\rho': \G \to \gl(V')$ are two representations of $\G$, then the direct sum $\rho \oplus \rho': \G \to \gl(V \oplus V')$ defined by 
		$$(\rho \oplus \rho')_X(v,v')=(\rho_X(v), \rho'_X(v')) \quad \mbox{for all } X \in \G; \,v \in V;\,v' \in V'$$
		is also a representation of $\G$.
		\item If $\rho$ is a finite-dimensional representation of $\G$ and faithful on the centre of $\G$, then the direct sum of $\rho$ and adjoint representation $\ad$ is faithful on $\G$, where $ \ad: \G \to \gl(\G), \, X \mapsto \ad_X$ such that
		\[\ad_X(Y):= [X,Y] \quad \mbox{for all } Y \in \G.\]
	\end{romanlist}
\end{remark}

For every $g \in G$, we denote the internal automorphism associated with $g$ by $A_g : G \to G$, i.e. 
$A_g(x) := g.x.g^{-1}$ for all $x \in G.$ By taking the derivative at the identity, this automorphism induces the map ${A_g}_{*} : \G \to \G$ defined as follows 
\[{A_g}_{*} (X) : = \frac{d}
{{dt}}\left[ {g.\exp \left( {tX} \right).g^{ - 1} } \right]\left| {_{t = 0} } \right.\quad \mbox{for all } X 
\in \G.\]
This map is called \emph{tangent map} of $A_g$. 

\begin{definition}
	\begin{romanlist}[(ii)]
		\item The action
		\[\Ad: G \to \Au(\G), \, g \mapsto \Ad_g := {A_g}_{*} \quad \mbox{for all } g \in G\] 		
		is called the \emph{adjoint representation} of $G$ in $\G$.
		\item The {\it coadjoint representation} (or {\it $\K$-representation}) of $G$ in $\G^*$ is the action  
   	\[\K: G \to \Au(\G^*), \, g \mapsto \K_g \]
   	such that  	\[\left\langle \K_g (F), X \right\rangle : = \left\langle F, \Ad_{g^{-1}}
   	(X) \right\rangle \quad \mbox{for all } g \in G; F \in \G^*; X \in \G.\]    	
	\end{romanlist}
\end{definition}

\begin{definition}
  The \emph{coadjoint orbit} (or $\K$\emph{-orbit}) of $G$ through any $F \in \G^*$ is defined to be
  				$$\Omega_F=\{ \K_g(F) : g \in G \}.$$
\end{definition}

The description of $\K$-orbits of a given Lie group is very interesting. In 1990, the second author introduced a method of describing based on the structure of Lie algebra (see \cite{Vu90}). Namely, for each $F \in \G^*$, we define 
$$\Omega_F(\G)= \left\lbrace  F_X : X \in \G \right\rbrace$$
in which $F_X \in \G^*$ is defined by
 $$\langle F_X, Y \rangle = \langle F, \exp(\ad_X) ( Y) \rangle \quad \mbox{for all } Y \in \G.$$
Obviously, we always have $\Omega_F(\G) \subseteq \Omega_F$
for all $F \in \G^*$. First, we describe the results on $\Omega_F(\G)$. Then, we show some sufficient conditions to obtain the equality $\Omega_F(\G) = \Omega_F$.
 
 Recall that a Lie group $G$ is called to be \emph{exponential}, if and only if the exponential map $\exp: \G \rightarrow G$ is diffeomorphic.

\begin{proposition}[see \cite{Bou89}]\label{pro8}
	The Lie group $G$ is exponential if and only if for each $X \in \G$ the operator $\ad_X$ has no purely imaginary eigenvalues.
\end{proposition}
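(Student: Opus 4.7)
The plan is to reduce the question to a spectral condition via the classical formula for the differential of the exponential map, namely
\[
d\exp_X = (dL_{\exp X})_e \circ \frac{1 - e^{-\ad_X}}{\ad_X},
\]
where the operator on the right is defined through the entire function $f(z)=(1-e^{-z})/z$ applied to $\ad_X$. Since the zeros of $f$ are exactly the points $2\pi i k$ with $k\in\mathbb{Z}\setminus\{0\}$, the map $\exp$ is a local diffeomorphism at $X$ if and only if $\ad_X$ has no eigenvalue in $2\pi i\mathbb{Z}\setminus\{0\}$. This reformulation is the common engine behind both directions.

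For the necessity direction $(\Rightarrow)$, if $\exp$ is a diffeomorphism then it is in particular a local diffeomorphism at every $X$. Were some $\ad_X$ to possess a nonzero purely imaginary eigenvalue $i\alpha$, rescaling $X\mapsto (2\pi/\alpha)X$ would produce the eigenvalue $2\pi i$ for $\ad_{(2\pi/\alpha)X}$ and hence a singular point of $d\exp$, a contradiction. This direction is essentially one line.

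The sufficiency direction $(\Leftarrow)$ is the substantive one. The spectral formula immediately guarantees that $d\exp$ is everywhere invertible, so only global bijectivity is at stake. I would proceed by induction on $n=\dim\G$, using solvability to produce a codimension-one ideal $\h\subset\G$ with corresponding closed normal subgroup $H\subset G$. The eigenvalue hypothesis descends to $\h$ (and to $\G/\h$), so by the induction hypothesis $\exp_H\colon\h\to H$ is a diffeomorphism. Picking $X_0\in\G\setminus\h$ and writing $\G=\R X_0\oplus\h$, one observes that $\Phi\colon\R\times\h\to G$, $\Phi(s,Y)=\exp(sX_0)\exp_H(Y)$, is a diffeomorphism via the trivialization of $G\to G/H\simeq\R$. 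Comparing $t\mapsto\exp_G(tX)$ with its factorization $\Phi(ts,Y(t))$ then leads to a linear ODE for $Y(t)$ whose coefficient operator is built from $\ad_{X_0}|_\h$; the absence of nonzero purely imaginary eigenvalues of $\ad_{X_0}|_\h$ is precisely what rules out rotational periodicity in the flow and yields a globally defined, injective parameterization.

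The main obstacle is this last step: translating the spectral condition into global injectivity of the factorization through $\Phi$. Local invertibility is purely formal, but to pass from local to global one has to argue that any putative collision $\exp_G(X)=\exp_G(X')$ with $X\neq X'$ must force some $\ad$-operator to pick up an eigenvalue in $2\pi i\R\setminus\{0\}$, contradicting the hypothesis. The ODE/flow analysis in the fiber $\h$, rather than in the ambient algebra, is what carries this part of the argument.
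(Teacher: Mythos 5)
The paper offers no proof of this proposition --- it is quoted from Bourbaki and is in substance Dixmier's theorem on the exponential map of simply connected solvable Lie groups --- so your attempt has to stand on its own. The necessity direction is correct and complete: the formula $d\exp_X=(dL_{\exp X})_e\circ\frac{1-e^{-\ad_X}}{\ad_X}$ together with the rescaling $X\mapsto(2\pi/\alpha)X$ disposes of it in a few lines. The sufficiency direction, however, contains a genuine gap, and it is exactly the step you yourself flag as ``the main obstacle.'' Two concrete problems. First, the ODE you announce for $Y(t)$ is not linear: writing $\exp_G(tX)=\exp(tsX_0)\exp_H(Y(t))$ with $X=sX_0+Z$ and differentiating gives $\frac{1-e^{-\ad_{Y(t)}}}{\ad_{Y(t)}}\,\dot Y(t)=W(t)$, where $W(t)=(1-e^{-t\,\ad_X})(sX_0)+Z\in\h$; the coefficient operator depends on the unknown $Y(t)$ itself, not merely on $\ad_{X_0}|_{\h}$. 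Second, and more seriously, the claim that ``absence of nonzero purely imaginary eigenvalues rules out rotational periodicity in the flow'' is the content of the theorem restated rather than proved: nothing in the sketch converts a putative collision $\exp_G(X)=\exp_G(X')$ into a nonzero purely imaginary eigenvalue of some $\ad_Y$, nor does anything establish surjectivity of $\exp_G$ onto the fibre $H$. Dixmier's actual argument does not induct over an arbitrary codimension-one ideal; it inducts over a carefully chosen minimal ideal (inside the centre of the nilradical), splits into cases according to its dimension and centrality, and exploits the root-space decomposition of the complexified algebra --- the spectral hypothesis enters through the structure of the roots $\alpha+i\beta$ (forcing $\beta$ proportional to $\alpha$), not through the single operator $\ad_{X_0}|_{\h}$.

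In short: keep the necessity half, but the sufficiency half is not yet a proof. Either reproduce the induction on a minimal ideal in full, or cite the result as the paper does; the codimension-one reduction by itself leaves global injectivity and surjectivity unestablished.
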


\begin{proposition}[see \cite{Vu90}]\label{pro6}
  If $G$ is exponential then $\Omega_F(\G) = \Omega_F$ for all $F \in \G^*$.
\end{proposition}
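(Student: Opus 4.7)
The plan is to establish the identity $F_X = \K_{\exp(-X)}(F)$ for every $X \in \G$, and then use the surjectivity of $\exp$ in the exponential case to conclude.

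First I would unpack the definition of the $\K$-action on $F \in \G^*$ when the group element is $g = \exp(-X)$. By definition,
\[
\langle \K_{\exp(-X)}(F), Y \rangle = \langle F, \Ad_{\exp(X)}(Y) \rangle \quad \text{for all } Y \in \G.
\]
Here I invoke the standard identity $\Ad_{\exp(X)} = \exp(\ad_X)$ valid in any Lie group, which follows from differentiating the conjugation action at the identity together with the naturality of $\exp$. Comparing with the definition of $F_X$,
\[
\langle F_X, Y \rangle = \langle F, \exp(\ad_X)(Y) \rangle,
\]
I obtain $F_X = \K_{\exp(-X)}(F)$.

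As a consequence,
\[
\Omega_F(\G) = \{ F_X : X \in \G \} = \{ \K_{\exp(-X)}(F) : X \in \G \} = \K_{\exp(\G)}(F).
\]
Now I invoke the hypothesis that $G$ is exponential: by the definition recalled just before the statement, $\exp : \G \to G$ is a diffeomorphism, hence in particular surjective. Therefore $\exp(\G) = G$, and the right-hand side above equals $\{\K_g(F) : g \in G\} = \Omega_F$. Combined with the inclusion $\Omega_F(\G) \subseteq \Omega_F$ mentioned in the text, this yields the equality $\Omega_F(\G) = \Omega_F$.

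There is no real obstacle here; the proof is essentially a definition chase, with the only non-trivial ingredient being the classical identity $\Ad_{\exp X} = \exp(\ad_X)$. The role of the exponential hypothesis is simply to guarantee that every group element can be written as $\exp(X)$ for some $X \in \G$, so that the set $\Omega_F(\G)$, which is built using only elements of the form $\exp(X)$, already exhausts the full $\K$-orbit.
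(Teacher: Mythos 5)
Your proof is correct: the identity $F_X = \K_{\exp(-X)}(F)$ follows exactly as you say from $\Ad_{\exp X} = \exp(\ad_X)$ and the paper's definition $\langle \K_g(F), Y\rangle = \langle F, \Ad_{g^{-1}}(Y)\rangle$, and surjectivity of $\exp$ then gives $\Omega_F(\G) = \K_{\exp(\G)}(F) = \Omega_F$. The paper itself gives no proof (it cites \cite{Vu90}), but this definition chase is the standard and evidently intended argument, so your proposal matches the expected approach.
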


\begin{proposition}[see \cite{Vu90}]\label{pro7}
Let $G$ be a connected Lie group. Assume that $\{ \Omega_F(\G) \}_{F \in \G^*}$ form a partition of $\G^*$ and for all $F' \in \Omega_F$ the sets $\Omega_{F'}(\G)$ are all open (or closed). Then we have $\Omega_F(\G)=\Omega_F$ for all $F \in \G^*$.
\end{proposition}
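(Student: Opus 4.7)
The plan is to exploit the connectedness of the coadjoint orbit $\Omega_F$ together with the partition hypothesis, showing that the partition restricts to a clopen partition of $\Omega_F$ and so must be trivial. First I would record the always-valid inclusion $\Omega_F(\G) \subseteq \Omega_F$: since $\Ad_{\exp X} = \exp(\ad_X)$, the functional $F_X$ equals $\K_{\exp(-X)}(F)$, and hence $\Omega_F(\G) = \{\K_{\exp(-X)}(F) : X \in \G\}$. Because $G$ is connected and $\Omega_F$ is the continuous image of $G$ under $g \mapsto \K_g(F)$, the orbit $\Omega_F$ is connected. Moreover, for every $F' \in \Omega_F$ one has $\Omega_{F'}(\G) \subseteq \Omega_{F'} = \Omega_F$, so the given global partition restricts to a partition $\mathcal{P}_F = \{\Omega_{F'}(\G) : F' \in \Omega_F\}$ of $\Omega_F$.

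In the ``open'' case the argument is immediate: the complement in $\Omega_F$ of any piece of $\mathcal{P}_F$ is a union of the other (open) pieces, hence open, so every piece is clopen in $\Omega_F$. The connectedness of $\Omega_F$ then forces all but one piece of $\mathcal{P}_F$ to be empty, and the nonempty one, which contains $F$, must coincide with $\Omega_F(\G)$; hence $\Omega_F(\G) = \Omega_F$.

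The ``closed'' case is what I expect to be the main obstacle, since a closed partition of a connected topological space need not be trivial in general. I would resolve it by observing that each piece is automatically open in $\Omega_F$: the smooth map $\phi_{F'}: \G \to \Omega_{F'}$, $X \mapsto \K_{\exp X}(F')$, has surjective differential at $0$ (its image is exactly the tangent space at $F'$ to the coadjoint orbit, by the general description of orbits of a smooth Lie group action), so by the submersion theorem $\Omega_{F'}(\G) = \phi_{F'}(\G)$ contains a neighborhood of $F'$ in $\Omega_F$. The partition hypothesis then guarantees $\Omega_{F''}(\G) = \Omega_{F'}(\G)$ whenever $F'' \in \Omega_{F'}(\G)$, so each piece contains a neighborhood of each of its points and is therefore open. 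With every piece both closed (by hypothesis) and open (by submersion), the connectedness argument of the previous paragraph again concludes that $\Omega_F(\G) = \Omega_F$.
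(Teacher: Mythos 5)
The paper itself does not prove this proposition --- it is quoted from \cite{Vu90} with only a citation --- so there is no in-text argument to compare yours against; I can only assess your proof on its own terms, and it is correct and complete. The skeleton (the identity $F_X=\K_{\exp(-X)}(F)$, hence $\Omega_F(\G)=\{\K_{\exp X}(F):X\in\G\}\subseteq\Omega_F$; the restriction of the global partition to a partition of the connected set $\Omega_F$; and the clopen argument) is exactly the natural route, and you correctly identified that the closed case is the one requiring an extra idea, supplying the right one: the orbit map composed with $\exp$ is a submersion onto the orbit near $0$, so each block $\Omega_{F'}(\G)$ contains a neighborhood of $F'$ in $\Omega_{F'}=\Omega_F$, and the partition hypothesis propagates this to every point of the block. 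Two small points of hygiene. First, every block $\Omega_{F'}(\G)$ contains $F'$ and so is nonempty; the conclusion from connectedness is not that ``all but one piece is empty'' but that the partition has a single block, which then must be $\Omega_F(\G)=\Omega_F$. Second, the submersion theorem gives openness in the orbit's intrinsic manifold topology (that of $G/G_{F'}$), whereas the hypothesis gives closedness in the subspace topology inherited from $\G^*$; since the manifold topology is finer, closed-in-$\G^*$ implies closed-in-manifold-topology, and $G/G_{F'}$ is connected because $G$ is, so the clopen argument does go through --- but it is worth saying in which topology you are working, since coadjoint orbits are in general only immersed submanifolds of $\G^*$.
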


\begin{definition} Let $\rho$ be a finite-dimensional representation of $\G$. Then the \emph{character} $\chi_\rho$ of $\G$ afforded by $\rho$ is the function given by 
		\[\chi_\rho(X) := \T \left( \exp(\rho(X)) \right) \quad \mbox{for all } X \in \G,\]		
where $\T$ is the trace	operator.
\end{definition}

\subsection{Foliations and measurable foliations}\label{Subsection2.2}

Let $V$ be an $n$-dimensional smooth manifold ($0 < n \in \mathbb{N}$). We always denote its tangent bundle by $TV$, the tangent space of $V$ at $x \in V$ by $T_xV$. Let us recall some notions in the theory of foliations. For more details, we refer the reader to the reference \cite{Con82} of Connes.

\begin{definition}
A smooth subbundle $\mathcal{F}$ of $TV$ is called {\it integrable} if and only if every $x \in V$ is contained in a submanifold $W$ of $V$ such that $T_pW = \mathcal{F}_p; \forall p \in W$.
\end{definition}

\begin{definition}
A {\it foliation} $(V, \mathcal{F})$ is defined by a smooth manifold $V$ and an integrable subbundle $\mathcal{F}$ of $TV$. Then, $V$ is called the {\it foliated manifold} and $\mathcal{F}$ is called the {\it subbundle defining the foliation}. The dimension of $\mathcal{F}$ is also called the {\it dimension of the foliation} $(V, \mathcal{F})$ and $n - \dim \mathcal{F}$ is called the {\it codimension of the foliation} $(V, \mathcal{F})$ in $V$. Each maximal connected submanifold $L$ of $V$ such that $T_xL = \mathcal{F}_x$ $(\forall x \in L)$ is called a {\it leaf} of the foliation $(V, \mathcal{F})$.
\end{definition}	

Each $x \in V$ has always a system of local coordinates $\{U; x_1, \dots, x_n\}$ such that $x \in U$ and for any leaf $L$ with $L \cap U \neq \emptyset$, each connected component of $L \cap U$ (which is called a {\it plaque} of the leaf $L$) is given by the equations
\[x_{k+1}=c_1, \ldots, x_n=c_{n-k},\]
where $k=\dim \mathcal{F} < n$ and $c_1, \ldots , c_{n-k}$ are suitable constants. Each such system $\{U; x_1, \ldots, x_n\}$ is called a {\it foliation chart}.

A $k$-dimensional foliation can be given by a partition of $V$ in a family $\mathcal{C}$ of its $k$-dimensional submanifolds ($k \in \mathbb{N}, 0 < k < n$) if there exists an integrable $k$-dimensional subbundle $\mathcal{F}$ of $TV$ such that each $L \in \mathcal{C}$ is a maximal connected integral submanifold of $\mathcal{F}$. In this case, $\mathcal{C}$ is the family of leaves of the foliation $(V, \mathcal{F})$. Sometimes $\mathcal{C}$ is identified with $\mathcal{F}$ and we say that $(V, \mathcal{F})$ is formed by $\mathcal{C}$.	

\begin{definition}\label{ToPo}
	Two foliations $(V_1, \mathcal{F}_1)$ and $(V_2, \mathcal{F}_2)$ are said to be {\it topologically equivalent} if and only if there exists a homeomorphism $h: V_1 \rightarrow V_2$ such that $h$ sends leaves of $\mathcal{F}_1$ onto those of $\mathcal{F}_2$. The mapping $h$ is called a {\it topological equivalence} of considered foliations.
\end{definition}

\begin{definition}
A submanifold $N$ of the foliated manifold $V$ is called a {\it transversal} if and only if $T_{x}V = T_{x}N \oplus {\mathcal{F}}_{x},\, \forall x \in N$. Thus,
	$\dim N = n - \dim \mathcal{F} = \, \mbox{codim} \mathcal{F}$.	
	A Borel subset $B$ of $V$ such that $B\cap L$ is countable for any leaf $L$
	is called a {\it Borel transversal} to $(V, \mathcal{F})$.
\end{definition}

\begin{definition}
A {\it transverse measure} $\Lambda$ for the foliation $(V, \mathcal{F})$ is $\sigma$-additive map 
	$B \mapsto \Lambda (B)$ from the set of all Borel transversals to [0,
	+$\infty$] such that the following conditions are satisfied:
	\begin{romanlist}[(ii)]
	\item If $\psi$ : $B_{1} \rightarrow B_{2}$ is a Borel bijection and
	$\psi (x)$ is on the leaf of any $x \in B_{1}$, then $\Lambda(B_{1}) = \Lambda(B_{2})$.
	\item $\Lambda(K)< + \infty$ if $K$ is any compact subset of a smooth transversal submanifold of $V$.
	\end{romanlist}
	
	By a {\it measurable foliation} in the sense of Connes \cite{Con82} we mean a foliation
	$(V, \mathcal{F})$ equipped with a transverse measure
	$\Lambda$.
\end{definition}

Let $(V, \mathcal{F})$ be a foliation with $\mathcal{F}$ is
oriented. Then the complement of zero section of the bundle
${\Lambda}^{k}(\mathcal{F})$ ($0 < k = \dim \mathcal{F} < n$) has two
components ${\Lambda}^{k}{(\mathcal{F})}^{-}$ and
${\Lambda}^{k}{(\mathcal{F})}^{+}$.
Let $\mu$ be a measure on $V$ and  $\{ U; x_1, \ldots, x_n \}$
be a foliation chart of $(V, \mathcal{F})$.
Then $U$ can be identified with the direct product $N \times {\Pi}$
of some smooth transversal submanifolds $N$ of $V$ and some plaques
$\Pi$. The restriction of $\mu$ on $U \equiv N \times {\Pi}$ becomes
the product ${\mu}_{N} \times {\mu}_{\Pi}$ of measures ${\mu}_{N}$
and ${\mu}_{\Pi}$ respectively.
Let $X \in C^{\infty}{\bigl({\Lambda}^{k}(\mathcal{F})\bigr)}^{+}$
be a smooth $k$-vector field and ${\mu}_{X}$ be the measure on each leaf $L$
determined by the volume element $X$.

\begin{definition}
The measure $\mu$ is called
	{\it $X$-invariant} if and only if ${\mu}_{X}$ is proportional to
	${\mu}_{\Pi}$ for an arbitrary foliation chart $\{ U; x_{1}, \ldots, x_{n} \}$.
\end{definition}

\begin{definition}
Let ($X, \mu$), ($Y, \nu$) be two pairs where $X,Y \in C^{\infty}{\bigl({\Lambda}^{k}(\mathcal{F})\bigr)}^{+}$ and $\mu, \nu$
	are measures on $V$ such that $\mu$ is $X$-invariant, $\nu$ is $Y$-invariant. Then ($X, \mu$ ), ($Y, \nu$ ) are called {\it equivalent} if and only if $Y = \varphi X$ and $\mu = \varphi \nu$ for some $\varphi \in C^{\infty}(V).$
\end{definition}

\begin{remark}\label{R222}
	There is a bijection between the set of transverse measures for
	$(V, \mathcal{F})$ and the set of equivalent classes of pairs ($X, \mu$), where $X \in C^{\infty}{\bigl({\Lambda}^{k}(\mathcal{F})\bigr)}^{+}$ and $\mu$ is a $X$-invariant measure on $V$. Thus, to prove that $(V, \mathcal{F})$ is measurable, we only need to choose a suitable pair ($X, \mu$) on $V$.
\end{remark}

We close this section by recalling classification of all non 2-step nilpotent Lie algebras in {\Li}. The next section will present our main results concerning with these algebras and Lie groups corresponding to them.

\subsection{Classification of non 2-step nilpotent Lie algebras in {\Li}}

\begin{proposition}[see \cite{Le20}]\label{prop12}
	Assume that $\G$ belongs to {\Li}\,\emph{(}$3 \leqslant n \in \N$\emph{)} and is not 2-step nilpotent. Then we can choose a suitable basis $\{X_1, \ldots, X_n\}$ of $\G$ such that the following assertions hold.
	\begin{enumerate}
		\item\label{part1} Assume that $\G$ is indecomposable.
		\begin{romanlist}[(ii)]
			\item If $n = 3$ then $\G$ is one of the following Lie algebras:
			\begin{romanlist}[(b)]
				\item $\G_{3,1(\lambda)}, 0 < \ve\lambda\ve \leq 1 \colon$ $[X_3, X_1] = X_1$ and $[X_3, X_2] = \lambda X_2$.
				\item $\G_{3,2} \colon$ $[X_3, X_1] = X_1$ and $[X_3, X_2] = X_1 + X_2$.
				\item $\G_{3,3(\lambda)}, \lambda \geq 0 \colon$ $[X_3, X_1] = \lambda X_1 - X_2$ and $[X_3, X_2] = X_1 + \lambda X_2$.
			\end{romanlist}
			
			\item If $n = 4$ then $\G$ is one of the following Lie algebras:
			\begin{romanlist}[(b)]
				\item $\G_{4,1} \colon$ $[X_3, X_1] = X_1$ and $[X_3, X_4] = X_2$.
				\item $\G_{4,2} \colon$ $[X_3, X_2] = X_1$ and $[X_3, X_4] = X_2$.
				\item $\G_{4,3} \colon$ $[X_3, X_2] = X_1$, $[X_4, X_1] = X_1$ and $[X_4, X_2] = X_2$.
				\item $\G_{4,4} = \af (\C)$.	               
			\end{romanlist}
			
			\item If $n = 5+2k \, (k \geq 0)$ then $\G \cong \G_{5+2k}$ with $[X_3, X_4] = X_1$ and
			\[
			[X_3, X_1] = [X_4, X_5] = \dotsb = [X_{4+2k}, X_{5+2k}] = X_2.
			\]
			
			\item If $n = 6+2k \, (k \geq 0)$ then $\G$ is one of the following Lie algebras:
			\begin{romanlist}[(b)]
				\item $\G_{6+2k,1} \colon$ $[X_3, X_1] = X_1$ and 
				\[
				[X_3, X_4] = [X_5, X_6] = \dotsb = [X_{5+2k}, X_{6+2k}] = X_2.
				\]
				\item $\G_{6+2k,2} \colon$ $[X_3, X_4] = X_1$ and 
				\[
				[X_3, X_1] = [X_5, X_6] = \dotsb = [X_{5+2k}, X_{6+2k}] = X_2.
				\]
			\end{romanlist}
		\end{romanlist}
		
		\item If $\G$ is decomposable then it is one of the following Lie algebras:
		\begin{romanlist}[(ii)]
			\item $\af (\R) \oplus \af (\R)$ or its trivial extension.
			\item $\af (\R) \oplus \h_{2m+1}$ or its trivial extension when $n > 2m +3$, $m \geq 1$.
			\item A trivial extension of the Lie algebras listed in Part (\ref{part1}).
		\end{romanlist}
	\end{enumerate}
\end{proposition}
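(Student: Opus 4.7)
The plan is to classify the indecomposable algebras in {\Li} that are not $2$-step nilpotent and then extend to the decomposable case. The starting observation is that, since $\G$ is solvable, the derived ideal $\G^1 := [\G,\G]$ is nilpotent; being two-dimensional, it must be abelian, so $\G^1 \cong \R^2$. Fix a basis $\{X_1,X_2\}$ of $\G^1$. Because $\G^1$ is abelian, the adjoint action $\ad \colon \G \to \gl(\G^1) \cong \gl(2,\R)$ vanishes on $\G^1$ and factors through the abelian quotient $\G/\G^1$, so its image $\mathfrak{a} := \ad(\G)$ is an \emph{abelian} subalgebra of $\gl(2,\R)$. The assumption that $\G$ is not $2$-step nilpotent is exactly $\mathfrak{a} \neq 0$.

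The heart of the argument is a case analysis on $\mathfrak{a}$ up to $\GL(2,\R)$-conjugation, which corresponds to a change of basis in $\G^1$. When $\dim \mathfrak{a} = 1$ the single generator, after rescaling, is conjugate to one of three canonical matrices: a real-diagonal form $\mathrm{diag}(1,\lambda)$ with $0 < |\lambda| \leq 1$, a non-trivial Jordan block, or a rotation-dilation block with parameter $\lambda \geq 0$. These three options already account for the three-dimensional algebras $\G_{3,1(\lambda)}$, $\G_{3,2}$, $\G_{3,3(\lambda)}$. When $\dim \mathfrak{a} = 2$, the simultaneous normal form of a pair of commuting matrices produces only a very short list of possibilities.

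Next, decompose $\G = U \oplus K \oplus \G^1$ as a vector space, where $U$ maps isomorphically onto $\mathfrak{a}$ via $\ad$ and $K$ is a complement to $\G^1$ inside the centraliser $Z_\G(\G^1)$. All brackets land in $\G^1$, so $\G$ is determined by three pieces of data: the embedding $U \hookrightarrow \gl(\G^1)$, the bilinear map $\beta \colon U \times K \to \G^1$ given by $\beta(u,k) := [u,k]$, and the alternating form $\omega \colon \Lambda^2 K \to \G^1$. The Jacobi identity applied to triples $(u,k,k')$ forces a compatibility between $\beta$, $\omega$ and the eigenstructure of $\mathfrak{a}$, while $(u,u',k)$ makes $\beta$ respect the $\mathfrak{a}$-module structure on $K$. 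Using that module structure to normalise $\beta$, and imposing that the images of $\beta$ and $\omega$ together span $\G^1$, one arrives at the four-dimensional list $\G_{4,j}$ and at the infinite families $\G_{5+2k}$ and $\G_{6+2k,j}$, in which $K$ carries a Heisenberg-like alternating form.

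The main obstacle is the book-keeping: one must track the residual gauge freedom (the stabiliser of the canonical form of $\mathfrak{a}$ inside $\GL(2,\R)$) to decide which parameters can be absorbed into rescalings and which remain as genuine moduli, and one must verify non-isomorphism of the surviving representatives by computing invariants such as $\dim Z(\G)$, the dimension of the nilradical, and the spectrum of the action of $\G/\G^1$ on $\G^1$. Finally, part~(2) follows by repeatedly splitting off one-dimensional central summands: what remains is either on the indecomposable list of part~(1) or, when the $2$-dimensional derived ideal is distributed across two non-abelian factors, is isomorphic to $\af(\R) \oplus \af(\R)$ or to $\af(\R) \oplus \h_{2m+1}$.
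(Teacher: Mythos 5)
A preliminary remark: the paper itself gives no proof of Proposition~\ref{prop12} --- it is quoted from \cite{Le20}, and the method of that reference is, as its title indicates, exactly the matrix-theoretic one you adopt: pass to the abelian subalgebra $\mathfrak{a}=\{\ad_X|_{\G^1}: X\in\G\}$ of $\gl(2,\R)$ and classify via normal forms of (pairs of) commuting real $2\times 2$ matrices. Your opening reductions are all correct: $\G^1$ is abelian because a $2$-dimensional nilpotent Lie algebra is abelian, $\mathfrak{a}$ is abelian because the restriction map kills $\G^1$, and ``not $2$-step nilpotent'' is precisely $\mathfrak{a}\neq 0$.

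That said, there are two concrete slips in your structural setup and one large elision. First, with $\G=U\oplus K\oplus\G^1$ the bracket is \emph{not} determined by the three data you list: you have dropped the alternating map $\Lambda^2 U\to\G^1$, $(u,u')\mapsto[u,u']$. This is vacuous when $\dim\mathfrak{a}=1$, but in the case $\dim\mathfrak{a}=2$ (which produces $\G_{4,3}$ and $\G_{4,4}$) you must show it can be normalised to zero, e.g.\ by translating a generator of $U$ by an element of $\G^1$ and using invertibility of one of the commuting operators. Second, the requirement that the derived ideal equal $\G^1$ is not ``$\mathrm{im}\,\beta$ and $\mathrm{im}\,\omega$ span $\G^1$'': the derived ideal is spanned by $\mathrm{im}\,\beta$, $\mathrm{im}\,\omega$, $[U,U]$ \emph{and} $\sum_u \ad_u(\G^1)$; indeed for $\G_{3,1(\lambda)}$ one has $K=0$ and $\beta=\omega=0$, and everything comes from $\ad_{X_3}(\G^1)$. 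Finally, ``one arrives at the list'' conceals the actual content of the classification: the Jacobi identity on $(u,k,k')$ gives $\ad_u(\mathrm{im}\,\omega)=0$ for every $u$, and from this one must deduce, among other things, that indecomposability forces $\dim\mathfrak{a}=1$ as soon as $n\geq 5$, and one must normalise $\omega$ to the Heisenberg form on $K$ and kill the residual $\beta$-moduli; none of this is indicated. Likewise, the decomposable case silently invokes the classification of solvable algebras with $1$-dimensional derived ideal (only $\af(\R)$ and $\h_{2m+1}$ occur as indecomposable non-abelian factors), which should be cited explicitly. As a road map the proposal is sound and follows the strategy of \cite{Le20}; as written it is an outline rather than a proof.
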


\section{The main result}\label{sec:3}

From now on, $\G$ belongs to {\Li} listed in Proposition \ref{prop12}.

\subsection{Upper bound for minimal degree of a faithful representation}

First, let us consider the indecomposable case. It is easily seen that the centers of $\G_{3,1(\lambda)}$, $\G_{3,2}$, $\G_{3,3(\lambda)}$, $\G_{4,3}$ and $\G_{4,4}$ are all trivial; the Lie algebras $\G_{4,2}$, $\G_{5+2k}$ and $\G_{6+2k,2}$ are $3$-step nilpotent; the remaining algebras, $\G_{4,1}$ and $\G_{6+2k,1}$, are not nilpotent with 1-dimensional center and can be rewritten as only one family.

\begin{remark}\label{rem13}
 Assume that $\G$ belongs to $\{\G_{4,1}, \G_{6+2k,1} (k \geq 0)\}$. Then $\G$ has a basis $\{X_1, \ldots, X_n\}$ ($n$ is an even number greater than or equal to 4) with the following non-trivial Lie brackets
 $$[X_3,X_1]=X_1\;\; ;\;\; [X_{3+2i}, X_{4+2i}]=X_2, \quad \left(  0\leq i \leq \frac{n-4}{2}\right).$$
Moreover, the center of $\G$ is $Z(\G)=\s \{X_2\}$.
\end{remark}

The first result in this paper is explicitly constructing faithful representations of small degree for an infinite family of non nilpotent Lie algebras in {\Li}. Thereby, we obtain upper bound of $\mu(\G)$ for each $\G$ being indecomposable in Proposition \ref{prop12}.

\begin{theorem}\label{thm14}
 Let $\G$ be a non 2-step nilpotent Lie algebra in {\Li}. Assume that $\G$ is indecomposable. Then the following assertions hold.
 	\begin{romanlist}[(ii)]
		\item\label{1-thm14} If $\G$ has trivial center then $\mu(\G) \leq n$.
		\item\label{2-thm14} If $\G$ is nilpotent then $\mu(\G) \leq n+1$.
		\item\label{3-thm14} If $\G$ is not nilpotent and has non-trivial center then $\mu(\G) \leq \frac{3}{2}n+1$.
	\end{romanlist}  
\end{theorem}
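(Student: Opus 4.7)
The plan is to treat the three cases separately, using the adjoint representation $\ad : \G \to \gl(\G)$ as the backbone. Recall that $\ad$ has degree $n$ and kernel $Z(\G)$; by Remark \ref{rem10}(ii) it suffices in each case to adjoin (if necessary) a representation that is faithful on the center. For part (\ref{1-thm14}), $Z(\G) = 0$ already forces $\ad$ itself to be faithful, so $\mu(\G) \leq n$ with nothing further to do.

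For part (\ref{3-thm14}), Remark \ref{rem13} gives a basis of $\G$ in which $Z(\G) = \R X_2 \subseteq [\G,\G]$. The key observation is that the linear span $\h := \s\{X_2, X_3, X_4, \ldots, X_n\}$ is a Heisenberg subalgebra isomorphic to $\h_{n-1}$, since the pairs $(X_3, X_4), (X_5, X_6), \ldots, (X_{n-1}, X_n)$ all bracket to $X_2$ and $X_2$ itself is central. I would take the standard faithful representation of $\h_{n-1}$ on a space of dimension $\frac{n}{2}+1$ (realized by strictly upper triangular matrices) and extend it to a representation $\tau$ of $\G$ by setting $\tau(X_1) := 0$; the sole remaining bracket $[X_3, X_1] = X_1$ then becomes the tautology $0 = 0$, so $\tau$ is well-defined. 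Since $\tau(X_2) \neq 0$, the direct sum $\ad \oplus \tau$ is faithful by Remark \ref{rem10}, of degree $n + \left(\frac{n}{2}+1\right) = \frac{3n}{2}+1$.

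For part (\ref{2-thm14}), each of $\G_{4,2}, \G_{5+2k}, \G_{6+2k,2}$ has one-dimensional center contained in $[\G, \G]$, so a one-dimensional character cannot augment $\ad$ to a faithful representation. I would instead exhibit, for each family, an explicit faithful representation on $\R^{n+1}$ by strictly upper triangular matrices: realize the nilpotent derivation $\ad_{X_3}$ as a Jordan block so that the chain $X_4 \mapsto X_1 \mapsto X_2$ (respectively $X_4 \mapsto X_2 \mapsto X_1$ for $\G_{4,2}$) appears automatically, and place each remaining Heisenberg-type pair $(X_{4+2i}, X_{5+2i})$ or $(X_{5+2i}, X_{6+2i})$ in a disjoint elementary-matrix position whose commutator equals the shared central element. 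The main obstacle is precisely here: orchestrating several independent Heisenberg pairs that all collapse to the same $X_2$, while forcing every unlisted bracket to vanish inside a fixed $(n+1)$-dimensional space, requires a delicate global choice of positions and a careful verification of linear independence of the resulting matrices.
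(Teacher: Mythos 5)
Your parts (i) and (iii) are correct and essentially identical to the paper's argument: for (iii) your map $\tau$ (extend the standard degree-$(\tfrac{n}{2}+1)$ representation of the Heisenberg subalgebra $\s\{X_2,\dots,X_n\}$ by $\tau(X_1)=0$) is exactly the paper's composition $\pi\circ\varphi$, and the conclusion via Remark \ref{rem10}(ii) is the same.

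Part (ii), however, contains a genuine gap. You do not prove the bound $\mu(\G)\leq n+1$ for $\G_{4,2}$, $\G_{5+2k}$, $\G_{6+2k,2}$; you only sketch a plan for an explicit strictly-upper-triangular realization on $\R^{n+1}$ and then concede that ``orchestrating several independent Heisenberg pairs that all collapse to the same $X_2$ \dots requires a delicate global choice of positions and a careful verification.'' That delicate choice is precisely the content of the claim, and it is not routine: in $\G_{5+2k}$ the element $X_4$ participates both in the chain $X_4\mapsto X_1\mapsto X_2$ under $\ad_{X_3}$ and in the Heisenberg pair $[X_4,X_5]=X_2$, so the pairs cannot all be placed in ``disjoint elementary-matrix positions'' independently of the Jordan block, and one must check that no unwanted commutators appear while keeping the total size at $n+1$. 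The paper avoids this entirely by observing that these algebras are $3$-step nilpotent and invoking Scheuneman's theorem \cite{Sch74}, which equips every $3$-step nilpotent Lie algebra with a (complete) affine structure and hence a faithful affine, i.e.\ degree-$(n+1)$, representation. To close your argument you would need either to carry out the explicit matrix construction in full for each of the three families (including the verification of all vanishing brackets and of injectivity), or to cite a general result such as Scheuneman's.
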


\begin{proof}
Item (\ref{1-thm14}) is obvious because in this case, the adjoint representation is faithful. Item (\ref{2-thm14}) follows from the results of J. Scheuneman in \cite{Sch74} because $\G$ is $3$-step nilpotent. For Item (\ref{3-thm14}), $\G$ must be in $\{\G_{4,1}, \G_{6+2k,1} (k \geq 0)\}$, therefore it suffices to show that $\G$ has a faithful representation of degree $(\frac{3}{2}n+1)$. 

We observe that $\G$ admits the basis $\{X_1, \ldots, X_n\}$ $(n=2l, l \geq 2)$ with non-trivial Lie brackets given in Remark \ref{rem13}. The Heisenberg Lie algebra $\h_{ n-1}$ of dimension $n-1 = 2(l-1)+1$  has a basis $\{x_i,y_i,z\}$ such that non-trivial brackets are
$$[x_i,y_i]=z, \quad \left( 1 \leq i \leq l-1 \right).$$

We now define a linear map $\varphi: \G \rightarrow \mathfrak{h}_{n-1}$ by
$$\begin{array}{l l l}
 \varphi (X_1)=0,&\varphi (X_2)=z,&\\
 \varphi (X_{3+2i})=x_{i+1},& \varphi (X_{4+2i})=y_{i+1},&\left( 0 \leq i \leq \frac{n-4}{2}\right).
\end{array}$$
It is easy to check that $\varphi$ is a Lie algebra homomorphism. Note that $Z(\G)=\s \{X_2\}$ and $\varphi (X_2)=z \neq 0$. Therefore, the restriction of $\varphi$ on the center $Z(\G)$ is injective. From \cite{Bur98}, we have $\mu(\h_{n-1})=(l-1) + 2 = \frac{1}{2}n+1$. Moreover, it is well known that $\h_{ n-1}$ has a faithful representation $\pi$ of degree $\frac{1}{2}n+1$ which in terms of the canonical basis of $\R^{\frac{1}{2}n+1}$ is given by
\[ \pi \left( \sum_{i=1}^{l-1}a_ix_i + \sum_{i=1}^{l-1}b_iy_i + cz \right) =
\begin{bmatrix}
0 & a_1 & \cdots &a_{l-1} &c \\
&&&&b_1\\
&&0&&\vdots\\
&&&&b_{l-1}\\
&&&&0\\
\end{bmatrix}, \quad (a_i, b_i,c \in \R).
\]
Then $\pi \circ \varphi$ is a representation of degree $\frac{1}{2}n+1$ of $\G$ that is faithful on $Z(\G)$. By Remark \ref{rem10}, $(\pi \circ \varphi) \oplus \ad$ is a faithful representation of degree $\frac{3}{2}n+1$ of $\G$.
\end{proof}

\begin{remark}
	Of course it is necessary to show sharper results for the cases where the algebras are low-dimensional.
\begin{romanlist}[(ii)]
	\item First of all, it follows from \cite{Gha05} that the relation $\leq$ in Theorem \ref{thm14} is actually an equality for all 3-dimensional algebras.
	\item Secondly, $\G_{4,1}, \G_{4,2}, \G_{4,3}, \G_{4,4}$ coincide $4.3$, $4.1$, $4.9 (b=0)$, $4.12$ repeated in \cite{Gha13}, respectively. Hence $\mu(\G_{4,1})=\mu(\G_{4,2})=4$ and $\mu(\G_{4,3})=\mu(\G_{4,4})=3$.
\item Thirdly, $\G_5$ coincides $A_{5,5}$ repeated in \cite{Gha15}, so we get $\mu(\G_5)=4$.
\item Finally, $\G_{6,2}$ coincides $L_{6,10}$ repeated in \cite{Roj16}, so we get $\mu(\G_{6,2})=5$.
\end{romanlist}	
\end{remark}

Denote by $\mu_Z(\G)$ the minimal degree of a representation of $\G$ that is faithful on the centre $Z(\G)$. Proposition \ref{lem17} gives $\mu_Z(\G)$ for all $\G \in \{\G_{4,1}, \G_{6+2k,1} (k \geq 0)\}$.
 
\begin{proposition}\label{lem17}
Let $\G$ be an indecomposable Lie algebra in {\Li}. If $\G$ is not nilpotent and has non-trivial center then $\mu_Z(\G)= \frac{1}{2}n+1$.
\end{proposition}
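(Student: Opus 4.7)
The plan is to prove the two inequalities $\mu_Z(\G) \leq \frac{1}{2}n+1$ and $\mu_Z(\G) \geq \frac{1}{2}n+1$ separately. The upper bound is already contained in the proof of Theorem \ref{thm14}: the composition $\pi \circ \varphi$ constructed there has degree $\frac{1}{2}n+1$, and its restriction to $Z(\G) = \s\{X_2\}$ is injective by construction, so $\mu_Z(\G) \leq \frac{1}{2}n+1$ requires no additional work.

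For the lower bound I would exploit a natural Heisenberg subalgebra of $\G$. Writing $n = 2l$ with $l \geq 2$ and using the presentation in Remark \ref{rem13}, set $\mathcal{N} \co \s\{X_2, X_3, X_4, \ldots, X_n\}$. The only non-trivial bracket of $\G$ that leaves this subspace is $[X_3, X_1] = X_1$; the remaining non-trivial brackets are $[X_{3+2i}, X_{4+2i}] = X_2$, which stay inside $\mathcal{N}$. Hence $\mathcal{N}$ is a Lie subalgebra of $\G$ isomorphic to the Heisenberg algebra $\h_{n-1}$, and crucially $Z(\mathcal{N}) = \s\{X_2\} = Z(\G)$. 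For any representation $\rho : \G \to \gl(V)$ faithful on $Z(\G)$, the restriction $\rho|_{\mathcal{N}}$ is then a representation of $\h_{n-1}$ faithful on its centre, so it is enough to bound the degree of such a representation from below.

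Here I would invoke the ideal-structure observation that every non-zero ideal $I$ of $\h_{2m+1} = \s\{x_1,\ldots,x_m,y_1,\ldots,y_m,z\}$ must contain its centre $\s\{z\}$: for any $0 \neq u = \sum a_i x_i + \sum b_i y_i + cz \in I$, the brackets $[u, y_j] = a_j z$ and $[u, x_j] = -b_j z$ belong to $I$ and force $z \in I$, unless every $a_i$ and $b_i$ vanishes, in which case $u$ itself is already a non-zero multiple of $z$. Consequently any representation of $\h_{n-1}$ faithful on its centre is in fact faithful, and therefore $\dim V \geq \mu(\h_{n-1}) = \frac{1}{2}n+1$ by the value of Burde already used in Theorem \ref{thm14}. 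Combining the two bounds yields the claimed equality. The only genuinely new step is the ideal-structure remark for $\h_{2m+1}$; it is precisely what prevents one from ``collapsing'' a non-central direction in order to save dimension, and I expect it to be the conceptual heart of the argument, with everything else being assembly.
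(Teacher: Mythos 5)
Your proof is correct, but it takes a genuinely different route from the paper's. The paper proves the lower bound directly and self-containedly: given $\rho$ faithful on $Z(\G)=\s\{X_2\}$ and $v$ with $\rho(X_2)v\neq 0$, it studies the evaluation map $e_v:X\mapsto\rho(X)v$, observes that $\ker(e_v)$ is a subalgebra not containing $X_2$ and hence of dimension at most $\frac{1}{2}n$, so that $\dim \mathrm{im}(e_v)\geq \frac{1}{2}n$, and then gains the extra $+1$ by a separate bracket computation showing $v\notin \mathrm{im}(e_v)$. You instead restrict $\rho$ to the Heisenberg subalgebra $\mathcal{N}=\s\{X_2,X_3,\ldots,X_n\}\cong\h_{n-1}$ (which is indeed a subalgebra, since the only bracket leaving it, $[X_3,X_1]=X_1$, involves the omitted generator $X_1$), note that every nonzero ideal of a Heisenberg algebra contains its centre so that faithfulness on $Z(\G)=Z(\mathcal{N})$ forces $\rho|_{\mathcal{N}}$ to be faithful, and then invoke the lower-bound half of Burde's theorem $\mu(\h_{2m+1})=m+2$. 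Both arguments are sound; yours is shorter and conceptually cleaner, isolating exactly why the centre cannot be represented cheaply (it is contained in every nonzero ideal of a Heisenberg subalgebra of dimension $n-1$), but it outsources the quantitative heart of the estimate to the nontrivial inequality $\mu(\h_{2m+1})\geq m+2$ of \cite{Bur98}, which the paper invokes only for its constructive (upper-bound) half; the paper's argument is, in effect, a hands-on reproof of that inequality adapted to $\G$, at the cost of some tersely justified maximality claims about subalgebras avoiding $X_2$. Your identification of $\mathcal{N}$ and the arithmetic $\mu(\h_{n-1})=(l-1)+2=\frac{1}{2}n+1$ check out for $\G_{4,1}$ and for all $\G_{6+2k,1}$.
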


\begin{proof}
Obviously, $\G \in \{\G_{4,1}, \G_{6+2k,1} (k \geq 0)\}$. The representation $\pi \circ \varphi$ shown in the proof of Theorem \ref{thm14} is faithful on the centre of $\G$. Its degree is $\frac{1}{2}n+1$.

Now, assume that $\rho: \G \to \gl(V)$ is a representation that is faithful on the centre $Z(\G)=\s \{X_2\}$. We have to show $\dim V \geq \frac{1}{2}n+1$. It is clear that $\rho(X_2) \neq 0$, i.e. there exists a $v \in V\setminus \{0 \}$ such that $\rho(X_2)v \neq 0$. We observe that $v$ and all $\rho(X)v$ for $X \in \G$ generate a submodule $W$ of $V$.

Consider the \textit{evaluation map} $e_v: \G \to W$, $X \mapsto \rho(X)v$. Let $\mathfrak{a}= \ker (e_v)$ and $\mathfrak{b}=$ im$(e_v)$. It is easy to check that $\mathfrak{a}$ is a subalgebra of $\G$, not containing $X_2$. Since the derived algebra of $\G$ is spanned by $X_1$ and $X_2$ (see \cite{Le20}), the derived algebra of $\mathfrak{a}$ contained in of $\G$ is 0 or 1-dimensional. In other words, $\mathfrak{a}$ is a abelian subalgebra or a subalgebra having 1-dimensional derived algebra. We have
\noindent
\begin{equation}
\dim V \geq \dim W \geq \dim \mathfrak{b} = \dim \G - \dim \mathfrak{a}.\label{dim}
\end{equation}

\begin{itemlist}
\item We show $\dim \mathfrak{b} \geq \frac{1}{2}n$:
\end{itemlist}

The number on the right hand side of (\ref{dim}) is minimal if $\dim \mathfrak{a}$ is maximal. However, any maximal abelian subalgebra of $\G$ not containing $X_2$ has dimension $\frac{1}{2}n$. The dimension is also $\frac{1}{2}n$ for any maximal subalgebra not containing $X_2$ with 1-dimensional derived algebra. Hence $\dim \mathfrak{b} \geq \frac{1}{2}n.$

\begin{itemlist}
\item We show $v \notin \mathfrak{b}$, i.e., $\dim V \geq \dim \mathfrak{b} +1 \geq \frac{1}{2}n +1$:
\end{itemlist}

The fact is $X_1 \in \mathfrak{a}$ in case $\dim \mathfrak{a}$ is maximal, moreover, $\mathfrak{a}$ is a maximal subalgebra not containing $X_2$ such that $[X,Y]= \alpha X_1$ $(\alpha \in \R)$ for all $X,Y \in \mathfrak{a}$. Assume that $v \in \mathfrak{b}$, then there exists an $X$ not in $\mathfrak{a}$ such that $\rho(X)v = v$. Now, there must be some $Y \in \mathfrak{a}$ such that $$[X,Y]=\alpha X_1 + \beta X_2 \quad \mbox{ for } \alpha, \beta \in \R; \beta \neq 0.$$
(If not, $[X,Y]=\alpha X_1$ for all $Y \in \mathfrak{a}$ and by maximality of $\mathfrak{a}$ it follows $\s \{X,\mathfrak{a} \} = \mathfrak{a}$, contradicting the assumption that $X$ not in $\mathfrak{a}$). We have
$$\begin{array}{c r c l}
&\rho([X,Y])v  &=&\rho(\alpha X_1 + \beta X_2)v\\
\Longleftrightarrow & \rho(X) \rho(Y)v- \rho(Y) \rho(X)v&=& \alpha \rho(X_1)v+ \beta \rho(X_2)v.
\end{array}$$
By using $\rho(X_1)v=\rho(Y)v=0$, $\rho(X)v=v$ and $\beta \neq 0$, we obtain $\rho(X_2)v=0$. This is a contradiction.
\end{proof}

As regards decomposable case, if $\G$ is the direct sum of two Lie algebras $\G_1$ and $\G_2$, obviously $\mu(\G) \leq \mu(\G_1)+ \mu(\G_2)$. Hence it is easy to obtain an upper bound for decomposable algebras. 

\subsection{The picture of $\K$-orbits}

Let $\G$ be an indecomposable Lie algebra in {\Li} listed in Proposition \ref{prop12}. Recall that there exists only one connected and simply connected Lie group $G$ corresponding to $\G$. We shall denote these corresponding Lie groups by the capitals with the same indices as their Lie algebras, e.g. $G_{3,1(\lambda)}$ is corresponding to $\G_{3,1(\lambda)}$.

Denote by $\{X_1, \ldots, X_n\}$ the basis of $\G$. Let $\{X^*_1, \ldots, X^* _n \}$ be the dual basis of this basis. Then we can identify the dual space $\G^*$ with $\R^n$. For each $X \in \G$, the operator $\ad_X$ can be identified with its matrix in the basis $\{X_1, \ldots, X_n\}$. 

Before formulating the main theorem describing the picture of $\K$-orbits, we will specify which Lie group is exponential and carry out the necessary calculations. Here we will use MAPLE to perform most routine calculations.
 
\begin{proposition}\label{propexp}
	All connected and simply connected Lie groups corresponding to all non 2-step nilpotent Lie algebras in {\Li} being indecomposable, except to $\G_{3,3(\lambda =0)}$ and $\G_{4,4}$, are exponential.  
\end{proposition}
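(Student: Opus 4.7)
The plan is to apply Proposition \ref{pro8}, which reduces the exponentiality of $G$ to the purely algebraic condition that $\ad_X$ has no purely imaginary eigenvalue for every $X\in\G$. Accordingly, for each indecomposable algebra listed in Proposition \ref{prop12}, other than $\G_{3,3(\lambda=0)}$ and $\G_{4,4}$, I would write a generic element $X=\sum_{i=1}^{n}x_iX_i$, assemble the matrix of $\ad_X$ in the basis $\{X_1,\dots,X_n\}$, and read off its spectrum.

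A uniform simplification available throughout the list is that every such $\G$ has derived ideal $[\G,\G]\subseteq\s\{X_1,X_2\}$, which is abelian. Consequently $\ad_X$ preserves $\s\{X_1,X_2\}$ and acts as zero on the quotient $\G/[\G,\G]$. The characteristic polynomial of $\ad_X$ therefore factors as $\lambda^{n-2}$ times the characteristic polynomial of the $2\times 2$ block describing the action of $X$ on $\s\{X_1,X_2\}$, so in each case only two eigenvalues need to be inspected.

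Then I would go through the list case by case by reading this block off the bracket table. For $\G_{3,1(\lambda)}$ the eigenvalues are $x_3$ and $\lambda x_3$; for $\G_{3,2}$ they are $x_3$ with multiplicity two; for $\G_{3,3(\lambda)}$ with $\lambda>0$ they are $\lambda x_3\pm ix_3$, whose real part $\lambda x_3$ is nonzero whenever $x_3\ne 0$; for $\G_{4,1}$ and $\G_{6+2k,1}$ they are $x_3$ and $0$; for $\G_{4,3}$ they are $x_4$ with multiplicity two; and for $\G_{4,2}$, $\G_{5+2k}$ and $\G_{6+2k,2}$ the $2\times 2$ block itself is strictly triangular, so all eigenvalues are $0$. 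In every instance no nonzero purely imaginary eigenvalue can appear, and Proposition \ref{pro8} yields the claim.

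I do not anticipate a serious obstacle; the argument is essentially bookkeeping once the block reduction above has been observed. The only point requiring care is verifying that $\G_{3,3(\lambda=0)}$ and $\G_{4,4}$ are indeed the genuine exceptions. For the former, the relevant $2\times 2$ block is skew-symmetric with off-diagonal entries $\pm x_3$ and has eigenvalues $\pm ix_3$; for the latter the block has the form $x_4\,I+x_3\,J$ with $J$ a standard rotation generator, yielding eigenvalues $x_4\pm ix_3$. Choosing $x_3\ne 0$ in the first case and $x_4=0$, $x_3\ne 0$ in the second produces purely imaginary eigenvalues, explaining precisely why these two algebras must be removed from the statement.
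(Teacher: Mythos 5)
Your proposal is correct and follows essentially the same route as the paper: the paper likewise invokes Proposition \ref{pro8} and verifies, via the explicit matrices and spectra of $\ad_X$ recorded in Table 1, that only $\G_{3,3(\lambda=0)}$ and $\G_{4,4}$ admit nonzero purely imaginary eigenvalues. Your block-triangular reduction using the abelian derived ideal $\s\{X_1,X_2\}$ is a tidy way to organize the same eigenvalue computation, and all the resulting spectra you list agree with those in Table 1.
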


\begin{proof}
	For each $\G$ under consideration, we find all eigenvalues of $\ad_X$ for all $X \in \G$. Table 1 shows the results whereby all $\ad_X$ have no purely imaginary eigenvalues except for $\G_{3,3(\lambda =0)}$ and $\G_{4,4}$. The proof is immediate from Proposition \ref{pro8}. 
\end{proof}

As previously introduced, describing K-orbits $\Omega_F$ is reduced to describing $\Omega_F(\G)$. It is thus necessary to find the image of $\ad_X$ under the exponential map. Also by specific calculations in the cases of fixed dimensions and by induction in the cases of arbitrary dimensions, we obtain the results as shown in Table 2.

We add a comment on computing the characters of $\G$.

\begin{remark}\label{thm18} Let $\G$ be indecomposable in Proposition \ref{prop12}.
\begin{romanlist}[(ii)]
\item By taking trace of $\exp(\ad_X)$ (see Table 2), the character of $\G$ afforded by the adjoint representation is obtained as follows.
\begin{center}
\begin{tabular}{l  ll l l  }\hline 
$\G$ & $\chi_{\ad}(X)$, $X= \sum_{i=1}^n x_i X_i$&&$\G$ & $\chi_{\ad}(X)$, $X= \sum_{i=1}^n x_i X_i$ \\ \hline

$\G_{3,1(\lambda)}$ & $e^{x_3}+e^{\lambda x_3}+1$ &&$\G_{3,2}$ & $2e^{x_3}+1$ \\

$\G_{3,3(\lambda)}$& $2e^{\lambda x_3} \cos x_3+1$&&$\G_{4,1}$ & $e^{x_3}+3$ \\

$\G_{4,2}$ & 4&&$\G_{4,3}$ &$2e^{x_4}+2$  \\

$\G_{4,4}$ &$2e^{x_4}  \cos x_3+2$&&$\G_{5+2k}$ &$5+2k$\\

$\G_{6+2k,1}$ &$e^{x_3}+5+2k$&&$\G_{6+2k,2}$ &$6+2k$\\

 \hline

\end{tabular}
\end{center} 

\item The character of $\G$ afforded by $(\pi \circ \varphi) \oplus \ad$ (see the proof of Theorem \ref{thm14}) is $\chi_{(\pi \circ \varphi) \oplus \ad} = \chi_{\ad} + \frac{1}{2}n+1$ because for each $X \in \G$, $(\pi \circ \varphi)(X)$ identified with its matrix in the canonical basis of $\R^{\frac{1}{2}n+1}$ is a strictly upper triangular matrix.
\end{romanlist}
	 
\end{remark}

\begin{table}[!htp]
\tbl{Eigenvalues of $\ad_X$}
{\begin{tabular}{ l  l  l  l }  \toprule
 $\G$  & $\ad_X$, $X= \sum_{i=1}^n x_i X_i \,(x_i \in \R)$  &  Eigenvalues of $\ad_X$&\\ \hline
  &&&\\   
 \multicolumn{4}{l}{For $n =3, 4$: The results are obtained by a direct computation.}\\
  &&&\\ 
 $\G_{3,1(\lambda)}$ &$\begin{bmatrix}x_3&0&-x_1\\0&\lambda x_3&-\lambda x_2\\0&0&0 \end{bmatrix}$&$0,\; x_3,\; \lambda x_3$&\\ 
 
$\G_{3,2}$ & $\begin{bmatrix}x_3&x_3&-x_1-x_2\\0& x_3&- x_2\\0&0&0 \end{bmatrix}$ &$0,\; x_3 \mbox{ (multiplicity } 2) $&\\ 
 
$\G_{3,3(\lambda)}$ &$ \begin{bmatrix} \lambda x_3&x_3&-\lambda x_1-x_2\\-x_3&\lambda x_3&-\lambda x_2+x_1\\0&0&0 \end{bmatrix}$&$0,\; (\lambda \pm i)x_3 $&\\ 
 
$\G_{4,1}$ & $\begin{bmatrix}x_3&0&-x_1&0\\0& 0&- x_4&x_3\\0&0&0&0\\0&0&0&0 \end{bmatrix}$ &$0 \mbox{ (multiplicity } 3), \; x_3$&\\ 

$\G_{4,2}$ & $\begin{bmatrix}0&x_3&-x_2&0\\0& 0&- x_4&x_3\\0&0&0&0\\0&0&0&0 \end{bmatrix}$ &$0 \mbox{ (multiplicity } 4)$&\\
 
$\G_{4,3}$ & $\begin{bmatrix}x_4&x_3&-x_2&-x_1\\0& x_4&0&-x_2\\0&0&0&0\\0&0&0&0 \end{bmatrix} $ &$\begin{cases} 0 \mbox{ (multiplicity } 2), \\ x_4 \mbox{ (multiplicity } 2) \end{cases} $&\\

$\G_{4,4}$ & $\begin{bmatrix}x_4&x_3&-x_2&-x_1\\-x_3& x_4&x_1&-x_2\\0&0&0&0\\0&0&0&0 \end{bmatrix}$ &$\begin{cases} 0 \mbox{ (multiplicity } 2),\\ x_4 \pm ix_3 \end{cases}$&\\ 

&&&\\ 
 \multicolumn{4}{l}{For $n =5+2k, 6+2k$ $(k \geq 0)$: The results are obtained by induction.}\\
 &&&\\ 
$\G_{5+2k}$ &\multicolumn{2}{l}{$\begin{bmatrix}0&0&-x_4&x_3&0&0&0&\cdots&0&0\\x_3&0&- x_1&-x_5&x_4&-x_{5+2.1}&x_{4+2.1}&\cdots&-x_{5+2k}&x_{4+2k}\\0&0&0&0&0&0&0&\cdots&0&0\\\vdots&\vdots&\vdots&\vdots&\vdots&\vdots&\vdots&\ddots&\vdots&\vdots\\0&0&0&0&0&0&0&\cdots&0&0 \end{bmatrix}$}&$0 \mbox{ (multiplicity } 5+2k)$\\

$\G_{6+2k,1}$&\multicolumn{2}{l}{$\begin{bmatrix}x_3&0&-x_1&0&0&0&\cdots&0&0\\0&0&-x_4&x_3&-x_{6+2.0}&x_{5+2.0}&\cdots&-x_{6+2k}&x_{5+2k} \\0&0&0&0&0&0&\cdots&0&0 \\ \vdots&\vdots&\vdots&\vdots&\vdots&\vdots&\ddots&\vdots&\vdots\\0&0&0&0&0&0&\cdots&0&0 \end{bmatrix}$}&$0 \mbox{ (multiplicity } 5+2k), x_3$\\
 
$\G_{6+2k,2}$&\multicolumn{2}{l}{$\begin{bmatrix}0&0&-x_4&x_3&0&0&\cdots&0&0\\x_3&0&-x_1&0&-x_{6+2.0}&x_{5+2.0}&\cdots&-x_{6+2k}&x_{5+2k} \\0&0&0&0&0&0&\cdots&0&0 \\ \vdots&\vdots&\vdots&\vdots&\vdots&\vdots&\ddots&\vdots&\vdots\\0&0&0&0&0&0&\cdots&0&0 \end{bmatrix}$}&$0 \mbox{ (multiplicity } 6+2k)$\\
 
&&\\  \botrule
\end{tabular}}
\end{table}

\begin{table}[!htp]
\tbl{$\exp(\ad_X)$}
  {\begin{tabular}{ l  l  l} \toprule
 $\G$  & $\exp(\ad_X)$ &  \\ \hline
 &&\\
 $\G_{3,1(\lambda)}$ &$\begin{bmatrix}
e^{x_3}  &  0  &  -x_1p \\
0            &  e^{\lambda x_3}  &  -x_2q \\
0            &  0  &  1
\end{bmatrix}$ & where $\begin{cases} p= \frac{e^{x_3}-1}{x_3}, q= \frac{e^{\lambda x_3}-1}{x_3} \quad \mbox{if } x_3 \neq 0;\\
																			p=1, q= \lambda \quad \mbox{if } x_3 = 0 \end{cases}$\\ 

 $\G_{3,2}$ & $\begin{bmatrix}
e^{x_3}  & x_3 e^{x_3}  &  -x_1p-x_2e^{x_3} \\
0            &  e^{x_3}  &  -x_2p \\
0            &  0  &  1
\end{bmatrix}$ & where $\begin{cases} p= \frac{e^{x_3}-1}{x_3} \quad \mbox{if } x_3 \neq 0;\\
																			p=1 \quad \mbox{if } x_3 = 0 \end{cases}$\\

 $\G_{3,3(\lambda)}$ &$ \begin{bmatrix}
e^{\lambda x_3} \cos x_3 &  e^{\lambda x_3} \sin x_3  &  -x_2p-x_1q \\
-e^{\lambda x_3} \sin x_3            &  e^{\lambda x_3} \cos x_3  & x_1p-x_2q \\
0            &  0  &  1
\end{bmatrix} $ & where $\begin{cases} p=  \frac{e^{\lambda x_3} \sin x_3}{x_3}, q= \frac{e^{\lambda x_3} \cos x_3-1}{x_3}\\
																	\quad \mbox{if } x_3 \neq 0;\\
																			p=1, q= \lambda \quad \mbox{if } x_3 = 0 \end{cases}$\\

 $\G_{4,1}$ & $\begin{bmatrix}
e^{x_3}  &  0  &  -x_1p  &  0 \\
0            & 1  &   -x_4  &  x_3 \\
0            &  0  &  1  &0\\
0&0&0&1
\end{bmatrix}$ & where $\begin{cases} p= \frac{e^{x_3}-1}{x_3} \quad \mbox{if } x_3 \neq 0;\\
																			p=1 \quad \mbox{if } x_3 = 0 \end{cases}$\\

 $\G_{4,2}$ & $\begin{bmatrix}
1  &  x_3  &  -\frac{1}{2}x_3x_4-x_2  &  \frac{1}{2}x_3^2 \\
0            & 1  &   -x_4  &  x_3 \\
0            &  0  &  1  &0\\
0&0&0&1
\end{bmatrix}$ & \\ 

$\G_{4,3}$ & $\begin{bmatrix}
e^{x_4}  &  x_3e^{x_4}  &  -x_2p  & - x_1p-x_2x_3q \\
0            & e^{x_4}  &   0  &  -x_2p \\
0            &  0  &  1  &0\\
0&0&0&1
\end{bmatrix}$ & where $\begin{cases} p= \frac{e^{x_4}-1}{x_4}, q= \frac{x_4e^{x_4}-e^{x_4}+1}{x_4^2} \\
																			\quad \mbox{if } x_4 \neq 0;\\
																			p=1, q= \frac{1}{2} \quad \mbox{if } x_4 = 0 \end{cases}$\\ 

$\G_{4,4}$ &$ \begin{bmatrix} e^{x_4} \cos x_3  & e^{x_4} \sin x_3  &  p &q \\  -e^{x_4} \sin x_3  & e^{x_4} \cos x_3  &  -q &p  \\ 0&0&1&0\\ 0&0&0&1	\end{bmatrix} $ &\\
&\multicolumn{2}{l}{\quad where $\begin{cases} p=\frac{1}{x_3^2+x_4^2} \left[ (1-e^{x_4} \cos x_3)(x_1x_3+x_2x_4)+e^{x_4} \sin x_3(x_1x_4-x_2x_3) \right], \\
							q=\frac{1}{x_3^2+x_4^2} \left[ (1-e^{x_4} \cos x_3)(x_1x_4-x_2x_3)-e^{x_4} \sin x_3(x_1x_3+x_2x_4) \right] \\
																			\quad \mbox{if } x_3^2+x_4^2 \neq 0;\\
																			p=-x_2, q= -x_1 \quad \mbox{if } x_3=x_4 = 0 \end{cases}$}\\  

$\G_{5+2k}$ &\multicolumn{2}{l}{ $\left[ 
 \begin{array}{ c  |  c }
\begin{matrix}
1  &  0  &  -x_4  & x_3  &  0 \\
x_3            & 1  &  -\frac{1}{2}x_3x_4-x_1   &  \frac{1}{2}x_3^2-x_5  &  x_4  \\
0            &  0  &  1  &0&0\\
0&0&0&1&0\\
0&0&0&0&1
\end{matrix}   &
\begin{matrix}
0  &  0  &  \cdots  &  0  &  0\\  
-x_{5+2.1}  &  x_{4+2.1}  &  \cdots   & -x_{5+2k}  &  x_{4+2k}\\
0  &  0  &  \cdots  &  0  &  0\\  
0  &  0  &  \cdots  &  0  &  0\\  
0  &  0  &  \cdots  &  0  &  0
\end{matrix}   \\  \hline
[0]  & [I_{2k}]
\end{array}  
 \right]$ }\\ 
 
 $\G_{6+2k,1}$&\multicolumn{2}{l}{$\left[ 
 \begin{array}{ c  |  c }
\begin{matrix}
e^{x_3}  &  0  &  -x_1p &  0 \\
0           & 1  &  -x_4   & x_3    \\
0            &  0  &  1  &0\\
0&0&0&1\\
\end{matrix}   &
\begin{matrix}
0  &  0  &  \cdots  &  0  &  0\\  
-x_{6+2.0}  &  x_{5+2.0}  &  \cdots   & -x_{6+2k}  &  x_{5+2k}\\
0  &  0  &  \cdots  &  0  &  0\\  
0  &  0  &  \cdots  &  0  &  0
\end{matrix}   \\  \hline
[0]  & [I_{2(k+1)}]
\end{array}  
 \right]$ }\\ 
 &\multicolumn{2}{l}{\quad where $\begin{cases} p= \frac{e^{x_3}-1}{x_3} \quad \mbox{if } x_3 \neq 0;\\
																			p=1 \quad \mbox{if } x_3 = 0 \end{cases}$}\\  
																			
 $\G_{6+2k,2}$&\multicolumn{2}{l}{$\left[ 
 \begin{array}{ c  |  c }
\begin{matrix}
1  &  0  &  -x_4  &  x_3 \\
x_3           & 1  &  -\frac{1}{2} x_3x_4-x_1   &\frac{1}{2} x_3^2    \\
0            &  0  &  1  &0\\
0&0&0&1\\
\end{matrix}   &
\begin{matrix}
0  &  0  &  \cdots  &  0  &  0\\  
-x_{6+2.0}  &  x_{5+2.0}  &  \cdots   & -x_{6+2k}  &  x_{5+2k}\\
0  &  0  &  \cdots  &  0  &  0\\  
0  &  0  &  \cdots  &  0  &  0
\end{matrix}   \\  \hline
[0]  & [I_{2(k+1)}]
\end{array}  
 \right]$ }\\ 
 &&\\ \botrule
  \end{tabular} }
  \end{table}

The remaining part of this subsection will be devoted to describing the geometric picture of $\K$-orbits.
 
\begin{theorem}[{The picture of $\K$-orbits}]\label{thm20}
	Let $G$ be a connected and simply connected Lie group such that its Lie algebra $\G$ is indecomposable in {\Li} and is not 2-step nilpotent. Assume that $F(f_1, \ldots, f_n)$ is any fixed element and $X^*(x^*_1, \ldots, x^*_n)$ is an arbitrary element in $\G^* (\equiv \R^n)$ with respect to the basis $\{X^*_1, \ldots, X^* _n\}$. 
	\begin{itemize}
		\item[(i)] If $f_1=f_2=0$, the closed set $\{X^*: x^*_1 = x^*_2 = 0\}$ $\bigl(\equiv \{(0,0)\} \times \R^{n-2} \bigr)$ includes only trivial $\K$-orbits (i.e. 0-dimensional ones) $\Omega_F \equiv \{F\}$.
		
		\item[(ii)]  Conversely, the open set $\{X^* : {x^*_1}^2+{x^*_2}^2 \neq 0\}$ $\bigl(\equiv (\R^2 \setminus \{(0,0)\}) \times \R^{n-2} \bigr)$ is decomposed into non-trivial $\K$-orbits as in each of the cases below.
	
		\begin{enumerate}
			\item For $n = 3; \, G \in \{G_{3,1(\lambda)}; G_{3,2}; G_{3,3(\lambda)}\}$: All non-trivial $\K$-orbits are ones of dimension $2$ and they are half-planes or vertical cylinders as follows
			\[ \Omega_F= \begin{cases} \{ X^*(f_1s, f_2s^{\lambda},t) :  s,t \in \R; s>0 \} \quad \;\; \mbox{in case }G=G_{3,1(\lambda)};\\
				\{ X^*(f_1e^s, (f_1s+f_2)e^s,t) :  s,t \in \R \} \;\; \mbox{in case }G=G_{3,2}
			\end{cases}\]
			and in case $G=G_{3,3(\lambda)}$, by identifying $\G^*$ with $\C \times \R$ we have
			\[ \Omega_F= \{ X^*\bigl((f_1+if_2)e^{\lambda s+is}, t\bigr) \,: \, s,t \in \R \}.\]
			
			\item For $n = 4; \, G \in \{ G_{4,1}; G_{4,2}; G_{4,3}; G_{4,4} \}$
			\begin{itemize}			
				\item[2.1] When $G = G_{4,1}$: All non-trivial $\K$-orbits are $2$-dimensional.
				\begin{enumerate}
					\item[(a)] If $f_1 \neq 0 = f_2$ then $\Omega_F$ is a $2$-dimensional half-plane as follows
										\[\Omega_F= \{ X^*: x^*_2 = 0, \, x^*_4 = f_4; \, f_1 x^*_1 > 0, \, x^*_3 \in \R\}.\]
					\item[(b)] If $f_2 \neq 0$ then $\Omega_F$ is a $2$-dimensional cylinder or $2$-dimensional plane as follows
										\[\Omega_F= \{ X^*: x^*_2 = f_2, \, x^*_1 e^{-\frac{x^*_4}{x^*_2}} = f_1 e^{-\frac{f_4}{f_2}}, \, x^*_3 \in \R\}.\]
				\end{enumerate}
				\item[2.2] When $G = G_{4,2}$: All non-trivial $\K$-orbits are $2$-dimensional.
				\begin{enumerate}
					\item[(a)] If $f_1 = 0 \neq f_2$ then $\Omega_F$ is a $2$-dimensional plane as follows
										\[\Omega_F= \{ X^*: x^*_1 = 0, \, x^*_2 = f_2; \, x^*_3, \, x^*_4 \in \R\}.\]
					\item[(b)] If $f_1 \neq 0$ then $\Omega_F$ is a $2$-dimensional parabolic cylinder as follows
										\[\Omega_F= \{ X^*: x^*_1 = f_1, \, {x^*_2}^2 - 2x^*_1 x^*_4 = {f_2}^2 - 2f_1 f_4, \, x^*_3 \in \R \}.\]
				\end{enumerate}
				
				\item[2.3] When $G = G_{4,3}$: All non-trivial $\K$-orbits are either $2$-dimensional or $4$-dimensional.
				\begin{enumerate}
					\item[(a)] If $f_1 = 0 \neq f_2$ then $\Omega_F$ is a $2$-dimensional half-plane as follows
					\[\Omega_F= \{ X^*: x^*_1 = 0, \, x^*_3 = f_3, \, f_2 x^*_2 > 0, \, x^*_4 \in \R\}.\]
					
					\item[(b)] If $f_1 \neq 0$ then $\Omega_F$ is one of two $4$-dimensional half-spaces, namely it is
					\[\Omega_F= \{ X^*: f_1 x^*_1>0; \, x^*_2, \, x^*_3, \, x^*_4 \in \R \}.\]
				\end{enumerate}
				
				\item[2.4] When $G = G_{4,4} = \widetilde{\Af(\C)}$: There is only one  non-trivial $\K$-orbit which is $4$-dimensional. The unique non-trivial $\K$-orbit is exactly the open set $\{F : f_1^2+f_2^2 \neq 0\}$ $\bigl(\equiv (\R^2 \setminus \{(0,0)\}) \times \R^2 \bigr)$. 	
			\end{itemize}
			
			\item \label{Item3} For $n = 5 + 2k, \, G = G_{5+2k}; \, k \in \N$: All non-trivial $\K$-orbits are either $2$-dimensional or $(4+2k)$-dimensional.
			\begin{enumerate}
				\item If $f_2= 0\neq f_1$ then $\Omega_F$ is a $2$-dimensional plane as follows
				\[\Omega_F= \{ X^*: x^*_1 = f_1, x^*_2 = 0, x^*_j = f_j, \, \forall j = 5, \ldots, 5 + 2k\}.\]
				
				\item If $f_2 \neq 0$ then $\Omega_F$ is $(4 + 2k)$-dimensional, namely it is the hyperplane as follows
				\[\Omega_F = \{ X^*: x^*_2 = f_2, \, x^*_1, \, x^*_3, \, x^*_4, \, x^*_j \in \R; \, \forall j = 5, \ldots, 5 + 2k\}.\]				
			\end{enumerate}
			
			\item For $n = 6 + 2k, \, G \in \{G_{6+2k,1}; G_{6+2k,2} \}; \, k \in \N$: All non-trivial $\K$-orbits are either $2$-dimensional or $(4+2k)$-dimensional.
			\begin{enumerate}
				\item If $f_2= 0 \neq  f_1$ then $\Omega_F$ is $2$-dimensional.	
				\begin{itemize}
					\item When $G=G_{6+2k,1}$, $\Omega_F$ is a $2$-dimensional half-plane as follows
					\[\Omega_F= \{ X^*: f_1 x^*_1 > 0, x^*_2 = 0, x^*_j = f_j;\, \forall j = 4, \ldots, 6+2k\}.\]
					
					\item When $G=G_{6+2k,2}$, $\Omega_F$ is a $2$-dimensional plane as follows
					\[\Omega_F= \{ X^*: x^*_1 = f_1, x^*_2 = 0, x^*_j = f_j; \, \forall j = 5, \ldots, 6+2k\}.\]
				\end{itemize}
				
				\item \label{Item4b} If $f_2 \neq 0$ then $\Omega_F$ is $(4 + 2k)$-dimensional.
				\begin{itemize}
					\item When $G=G_{6+2k,1}$, $\Omega_F$ is a $(4+2k)$-dimensional cylinder or $(4+2k)$-dimensional plane as follows 
					\begin{multline*}
						\Omega_F= \{ X^*: x^*_2 = f_2, \, x^*_1 e^{-\frac{x^*_4}{x^*_2}} = f_1 e^{-\frac{f_4}{f_2}}; \\ x^*_3, \, x^*_j \in \R, j = 5, \ldots, 6+2k \}.
					\end{multline*}
					
					\item When $G=G_{6+2k,2}$, $\Omega_F$ is a $(4+2k)$-dimensional parabolic cylinder as follows
					\begin{multline*}
						\Omega_F= \{ X^*: x^*_2 = f_2, \, {x^*_1}^2 - 2 x^*_2 x^*_4 = {f_1}^2 - 2 f_2 f_4; \\ x^*_3, \, x^*_j \in \R, j = 5, \ldots, 6+2k \}.
					\end{multline*}
				\end{itemize}
			\end{enumerate}
		\end{enumerate}
	\end{itemize}  
\end{theorem}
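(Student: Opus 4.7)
The plan is to read off $\Omega_F(\G)$ directly from the matrices in Table 2 and then identify $\Omega_F(\G)$ with $\Omega_F$ via Proposition \ref{pro6} (for the exponential groups) or Proposition \ref{pro7} (for the two exceptional cases $G_{3,3(\lambda=0)}$ and $G_{4,4}$). Writing $F$ and $F_X$ as column vectors in the dual basis $\{X_1^*, \ldots, X_n^*\}$, the defining relation $\langle F_X, Y\rangle = \langle F, \exp(\ad_X)Y\rangle$ becomes $F_X = \exp(\ad_X)^{T}\cdot F$, so $\Omega_F(\G)$ is explicitly parametrised as $X$ ranges over $\G$. By Proposition \ref{propexp} combined with Proposition \ref{pro6} this already coincides with $\Omega_F$ for every $\G$ in the list except $\G_{3,3(\lambda=0)}$ and $\G_{4,4}$.

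Item (i) is a uniform consequence of the shape of every matrix in Table 2: only the first two columns of $\exp(\ad_X)$ carry nonconstant entries (because $[\G,\G] = \s\{X_1, X_2\}$), so only the first two rows of the transpose are nonconstant. Hence whenever $f_1 = f_2 = 0$, the action fixes $F$ and $\Omega_F(\G) = \{F\}$; since $\{F\}$ is both open and closed in itself, the same equality $\Omega_F(\G) = \Omega_F$ also follows in the two non-exponential cases.

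For item (ii) in the exponential setting I would proceed case by case, substituting $F$ into $\exp(\ad_X)^T$ and parametrising the image. For $G_{3,1(\lambda)}$ for instance one obtains $F_X = \bigl(f_1 e^{x_3},\, f_2 e^{\lambda x_3},\, f_3 - f_1 x_1 p - f_2 x_2 q\bigr)$; since $(f_1,f_2) \neq (0,0)$, either $x_1$ or $x_2$ forces the last coordinate to sweep out $\R$ while $s := e^{x_3}$ ranges over $(0,\infty)$, giving exactly the half-plane or vertical cylinder of Case 1. The same recipe works throughout: the first two coordinates follow a one- or two-parameter orbit in the $(x_1^*, x_2^*)$-plane, while the remaining coordinates are either free, fixed, or tied to the first integrals appearing in the statement (for example $x_2^*$ and $x_1^* e^{-x_4^*/x_2^*}$ for $G_{4,1}$ and $G_{6+2k,1}$, and the parabolic invariant ${x_1^*}^2 - 2 x_2^* x_4^*$ for $G_{6+2k,2}$). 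These invariants are checked a posteriori to be preserved by $\exp(\ad_X)^T$, and together with the explicit parametrisation they pin down the stated geometric shapes. The higher-dimensional families $n = 5+2k$ and $n = 6+2k$ pose no extra difficulty because the block form with identity block $I_{2k}$ in Table 2 reduces them to the four-dimensional analysis on the first four coordinates together with free parameters on the remaining ones.

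The genuinely delicate step, and the one I expect to be the main obstacle, is the passage from $\Omega_F(\G)$ to $\Omega_F$ for the two non-exponential groups $G_{3,3(\lambda=0)}$ and $G_{4,4}$, where Proposition \ref{pro6} is unavailable. The plan is to apply Proposition \ref{pro7} instead. In the $G_{3,3(\lambda=0)}$ case, the formula from Table 2 yields $F_X = \bigl((f_1+if_2)e^{is},\, t\bigr)$ under the identification $\G^* \equiv \C \times \R$, so each non-trivial $\Omega_F(\G)$ is the closed vertical cylinder $\{X^* : (x_1^*)^2 + (x_2^*)^2 = f_1^2 + f_2^2\}$; together with the trivial orbits these partition $\G^*$ into closed sets, so Proposition \ref{pro7} gives $\Omega_F = \Omega_F(\G)$. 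For $G_{4,4}$, the computation shows that for every $F$ with $(f_1, f_2) \neq (0,0)$ the set $\Omega_F(\G)$ equals the open set $\{X^* : (x_1^*)^2 + (x_2^*)^2 \neq 0\}$, so a single open non-trivial orbit partitions $\G^*$ with the trivial orbits, and Proposition \ref{pro7} again yields the equality, completing the description.
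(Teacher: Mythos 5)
Your overall strategy is the same as the paper's: compute $F_X$ by multiplying $F$ against $\exp(\ad_X)$ from Table 2, parametrise the image case by case, and convert $\Omega_F(\G)$ into $\Omega_F$ via Proposition \ref{pro6} for the exponential groups and Proposition \ref{pro7} for $G_{3,3(\lambda=0)}$; the case analysis and the invariants you name ($x^*_1e^{-x^*_4/x^*_2}$, ${x^*_1}^2-2x^*_2x^*_4$, etc.) match the paper's. Two remarks. First, a bookkeeping slip in your uniform argument for item (i): since $\ad_X$ takes values in $[\G,\G]=\s\{X_1,X_2\}$, it is the first two \emph{rows} of $\exp(\ad_X)-I$ (hence the first two \emph{columns} of its transpose) that are the only nonzero ones; your statement has rows and columns swapped, though the conclusion $F_X=F$ when $f_1=f_2=0$ is correct and is exactly what the paper verifies case by case. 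Second, and more substantively, your treatment of $G_{4,4}$ genuinely diverges from the paper, which simply cites the known description of the coadjoint orbits of $\widetilde{\Af(\C)}$ (Theorem 3.1 of \cite{Die99}) rather than arguing via Proposition \ref{pro7}. Your route has a circularity to repair: for $F$ in the open set $U=\{(x^*_1)^2+(x^*_2)^2\neq 0\}$, the hypothesis of Proposition \ref{pro7} asks that $\Omega_{F'}(\G)$ be open for \emph{all} $F'\in\Omega_F$, but the singleton pseudo-orbits outside $U$ are not open, so verifying the hypothesis already presupposes $\Omega_F\subseteq U$ — which is (half of) what you are trying to prove. This is fixable without Proposition \ref{pro7}: any $F'$ with $f'_1=f'_2=0$ annihilates $[\G,\G]$, and since $\Ad_g Y - Y\in[\G,\G]$ for all $g$ in a connected group, such $F'$ is fixed by the entire coadjoint action; hence $\Omega_F$ cannot meet the complement of $U$, and $\Omega_F=U=\Omega_F(\G)$ follows. (This observation also gives item (i) for both non-exponential groups at once, more cleanly than your "open and closed in itself" remark.) With that patch your argument is complete and self-contained where the paper outsources a case to the literature.
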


\begin{proof}
	From Proposition \ref{propexp} and Proposition \ref{pro6}, it follows that $\Omega_F=\Omega_F(\G)$ except $\G=\G_{3,3(\lambda=0)}$ and $\G=\G_{4,4}$. Recall that
	\[\Omega_F(\G) = \left\lbrace  F_X : X \in \G \right\rbrace \subset \G^*.\] 
	We consider a general element
	$$F_X=\sum_{i=1}^n x^*_i X^*_i = X^*(x^*_1, \ldots, x^* _n) \in \G^* \equiv \R^n.$$
	By a direct computation, we obtain $(x^*_1, \ldots, x^* _n) = (f_1, \ldots, f_n) \times \exp(\ad_X)$. Note that $\exp (\ad_X)$ is shown in Table 2.
	\begin{itemlist}
		\item Case $G=G_{3,1(\lambda)}$, we have
		$$\begin{cases} x^* _1= f_1e^{x_3}\\
			x^* _2=f_2 e^{\lambda x_3}\\
			x^* _3=-f_1 \frac{x_1(e^{x_3}-1)}{x_3}-f_2 \frac{x_2(e^{\lambda x_3}-1)}{x_3}+f_3
		\end{cases}$$
		if $x_3 \neq 0$ and $(x^* _1,x^* _2,x^* _3)=(f_1,f_2,-f_1x_1- \lambda f_2x_2+f_3)$ if $x_3=0$. Hence $\Omega_F=\{ F\}$ if $f_1=f_2=0$. In other words, each point $F$ with $f_1=f_2=0$ is a 0-dimensional $\K$-orbit. Otherwise, $f_1^2+f_2^2 \neq 0$, 
		$x^* _3$ always runs over a real line. Set $s=e^{x_3}$, then
		$$\Omega_F=\{ X^*(f_1s, f_2s^{\lambda},t) :  s,t \in \R; s>0 \}.$$
		\begin{itemlist}
			\item If $f_1 \neq 0$ and $f_2=0$, $\Omega_F$ is the half-plane $\{X^*: x^*_2 = 0, \, f_1 x^*_1 > 0, \, x^*_3 \in \R\}$.
			\item The same conclusion can be drawn for $f_1 =0, f_2 \neq 0$, namely $\Omega_F$ is the half-plane $\{X^*: x^*_1 = 0, \, f_2 x^*_2 > 0, \, x^*_3 \in \R\}$.
			\item If $f_1 \neq 0$ and $f_2 \neq 0$, $\Omega_F$ is the 2-dimensional vertical cylinder 
			\[\{X^*: \lambda \ln |x^*_1| - \ln |x^*_2| = \lambda \ln |f_1| - \ln |f_2|, \, f_1 x^*_1 > 0, \, f_2 x^*_2 > 0, \, x^*_3 \in \R \}.\]
		\end{itemlist}
		
		\item Case $G=G_{3,2}$, we have
		$$\begin{cases} x^* _1= f_1e^{x_3}\\
			x^* _2=f_1 x_3 e^{x_3}+f_2e^{x_3}\\
			x^* _3=-f_1 \frac{x_2x_3e^{x_3}+x_1e^{x_3}-x_1}{x_3}-f_2 \frac{x_2(e^{x_3}-1)}{x_3}+f_3
		\end{cases}$$
		if $x_3 \neq 0$ and $(x^* _1,x^* _2,x^* _3)=(f_1,f_2,-f_1(x_1+x_2)- f_2x_2+f_3)$ if $x_3=0$. Hence each point $F$ with $f_1=f_2=0$ is also a 0-dimensional $\K$-orbit. Otherwise, $f_1^2+f_2^2 \neq 0$, 
		$x^* _3$ runs over a real line. Set $s=x_3$, then
		$$\Omega_F=\{ X^*(f_1e^s, (f_1s+f_2)e^s,t) : s,t \in \R \}.$$
		\begin{itemlist}
			\item If $f_1 = 0$ and $f_2 \neq 0$, $\Omega_F$ is the half-plane $\{X^*: x^*_1 = 0, \, f_2 x^*_2 > 0\}$.
			\item If $f_1 \neq 0$, $\Omega_F$ is the 2-dimensional vertical cylinder
			\[\{X^*: x^*_2 - x^*_1 \ln |x^*_1| = f_2 - f_1 \ln |f_1|, \, f_1 x^*_1 > 0, \, x^*_3 \in \R\}.\]
		\end{itemlist}
		
		\item Case $G=G_{3,3(\lambda)}$, we have
		$$\begin{cases} x^* _1=f_1e^{\lambda x_3} \cos x_3-f_2 e^{\lambda x_3} \sin x_3\\
			x^* _2=f_1 e^{\lambda x_3} \sin x_3+f_2 e^{\lambda x_3} \cos x_3\\
			x^* _3=f_1  \frac{-x_2e^{\lambda x_3} \sin x_3-x_1 e^{\lambda x_3} \cos x_3+x_1}{x_3} + f_2 \frac{x_1e^{\lambda x_3} \sin x_3-x_2 e^{\lambda x_3} \cos x_3+x_2}{x_3}+f_3
		\end{cases}$$
		if $x_3 \neq 0$ and $(x^* _1, x^* _2, x^* _3)=(f_1,f_2,-f_1(\lambda x_1+x_2)- f_2(\lambda x_2-x_1)+f_3)$ if $x_3=0$. Hence $\Omega_F(\G_{3,3(\lambda)})=\{F\}$ if $f_1=f_2=0$. Otherwise, $f_1^2+f_2^2 \neq 0$, 
		$x^* _3$ always runs over a real line and by identifying $\G_{3,3(\lambda)}^*$ with $\C \times \R$, we have
		$$\Omega_F(\G_{3,3(\lambda)})= \{ X^*(f_1+if_2)e^{\lambda s+is},t) \,: \, s,t \in \R \}$$
		which is the veritical cylinder in $\G_{3,3(\lambda)}^* \equiv \C \times \R$. For $\lambda > 0$ it is true that $\Omega_F=\Omega_F(\G_{3,3(\lambda)})$. We are left with the task of determining $\Omega_F$ in case $\lambda=0$. In fact, the sets $\Omega_F(\G_{3,3(\lambda=0)})$ form a partition of $\G_{3,3(\lambda=0)}^*$. Further, they are all closed sets. By Proposition \ref{pro7} it is obvious that $\Omega_F=\Omega_F(\G_{3,3(\lambda=0)})$. This means $\Omega_F=\Omega_F(\G_{3,3(\lambda)})$ for every $\lambda \geq 0$.
		
		\item Case $G=G_{4,1}$, we have
		$$\begin{cases} x^* _1=f_1e^{x_3}\\
			x^* _2=f_2\\
			x^* _3=-f_1 \frac{x_1(e^{x_3}-1)}{x_3}-f_2 x_4+f_3\\
			x^* _4=f_2x_3+f_4
		\end{cases}$$
		if $x_3 \neq 0$ and $(x^* _1,x^* _2,x^* _3,x^* _4)=(f_1,f_2,-f_1x_1-f_2x_4+f_3,f_4)$ if $x_3=0$. Hence each point $F$ with $f_1=f_2=0$ is also a 0-dimensional $\K$-orbit. Otherwise, $f_1^2+f_2^2 \neq 0$, 
		$x^* _3$ always runs over a real line and $\Omega_F$ is always contained in the hyperplane $\{X^*: x^*_2 = f_2\}$. Set $s=x_3$, we have a family of 2-dimensional $\K$-orbits
		$$\Omega_F=\{ X^*(f_1e^s,f_2, t,f_2s+f_4) : s,t \in \R \}.$$
		\begin{itemlist}
			\item If $f_1 \neq 0$ and $f_2 = 0$, $\Omega_F$ is just a half-plane $\{x^*_2 = 0, \, x^*_4 = f_4, \, f_1 x^*_1 > 0\}$.
			\item If $f_2 \neq 0$, then $\Omega_F$ is the $2$-dimensional cylinder or $2$-dimensional plane 
			\[\{X^*: x^*_2 = f_2, \, x^*_1 e^{-\frac{x^*_4}{x^*_2}} = f_1 e^{-\frac{f_4}{f_2}}, \, x^*_3 \in \R\}.\]
		\end{itemlist}
		
		\item Case $G=G_{4,2}$, we have
		$$\begin{cases} x^* _1=f_1\\
			x^* _2=f_1x_3+f_2\\
			x^* _3=-f_1 ( \frac{1}{2}x_3x_4+x_2)-f_2 x_4+f_3\\
			x^* _4=\frac{1}{2}f_1x_3^2+f_2x_3+f_4.
		\end{cases}$$
		Hence each point $F$ with $f_1=f_2=0$ is also a 0-dimensional $\K$-orbit. Otherwise, $f_1^2+f_2^2 \neq 0$, 
		$x^* _3$ always runs over a real line. 
		Set $s=x_3$, we also have a family of 2-dimensional $\K$-orbits
		$$\Omega_F= \{ (f_1,f_1s+ f_2,t,\frac{1}{2} f_1s^2+f_2s+f_4) \,: \, s,t \in \R \}.$$
		\begin{itemlist}
			\item If $f_1 = 0$ and $f_2 \neq 0$, $\Omega_F$ is the plane $\{X*: x^*_1 = 0, \, x^*_2 = f_2, \, x^*_3, x^*_4 \in \R\}$.
			\item If $f_1 \neq 0$, then $\Omega_F$ is a $2$-dimensional parabolic cylinder 
			\[\{X^*: x^*_1 = f_1, \, {x^*_2}^2 - 2x^*_1 x^*_4 = {f_2}^2 - 2f_1 f_4, \, x^*_3 \in \R\}.\]
		\end{itemlist}
		
		\item Case $G=G_{4,3}$, we have
		$$\begin{cases} x^* _1=f_1e^{x_4}\\
			x^* _2=f_1x_3e^{x_4}+f_2e^{x_4} \\
			x^* _3=-f_1 \frac{x_2(e^{x_4}-1)}{x_4}+f_3\\
			x^* _4=-f_1 \frac{x_2x_3x_4e^{x_4}+(x_1x_4-x_2x_3)(e^{x_4}-1)}{x_4^2} -f_2 \frac{x_2(e^{x_4}-1)}{x_4}+f_4
		\end{cases}$$
		if $x_4 \neq 0$; otherwise, $x_4 = 0$, $(x^* _1,x^* _2,x^* _3,x^* _4)$ is $$\left( f_1,f_1x_3+f_2,-f_1x_2+f_3,-f_1(\frac{1}{2}x_2x_3+x_1)-f_2x_2+ f_4\right).$$ 
		Hence each point $F$ with $f_1=f_2=0$ is also a 0-dimensional $\K$-orbit. Otherwise, $f_1^2+f_2^2 \neq 0$, 
		$x^* _4$ always runs over a real line. 
		\begin{itemlist}
			\item If $f_1 = 0$ and $f_2 \neq 0$, we have 2-dimensional $\K$-orbits which are the half-planes $\Omega_F= \{X^*: x^*_1 = 0, x^*_3 = f_3, \, f_2 x^*_2 > 0\}$. 
			\item If $f_1 \neq 0$,  then 
			$x^* _2$, $x^* _3$ and $x^* _4$ run over three real lines, while 
			$x^* _1$ runs over a half-line. This means that the open set $\{X^* :  x^*_1 \neq 0\}$ is a union of two 4-dimensional $\K$-orbits which are half-spaces 
			$$\Omega_F= \{ X^*: f_1 x^*_1 > 0; \, x^*_2, x^*_3, x^*_4 \in \R\}.$$
		\end{itemlist}
		
		\item Case $G=G_{4,4}$, by Theorem 3.1 of \cite{Die99}, we have 
		\begin{itemlist}
			\item If $f_1 = f_2 = 0$ then $\Omega_F= \{F\}$ is a $0$-dimensional $\K$-orbit.  
			\item If ${f_1}^2 + {f_2}^2 \neq 0$, then the open set $\{X^* :  {x^*_1}^2 + {x^*_2}^2 \neq 0\} \equiv \R^2 \setminus \{(0,0)\} \times \R^2$ is the single 4-dimensional $\K$-orbit. 
		\end{itemlist}
		\item Case $G=G_{5+2k}$, we have
		$$\begin{cases} x^* _1=f_1+f_2x_3\\
			x^* _2=f_2\\
			x^* _3= -f_1x_4-f_2(\frac{1}{2}x_3x_4+x_1) +f_3 \\
			x^* _4=f_1x_3+f_2(\frac{1}{2}x_3^2-x_5) +f_4\\
			x^* _5=f_2x_4+f_5\\
			x^* _i= \begin{cases} -f_2x_{i+1}+f_i, \; \mbox{ if } i \mbox{ is even}\\
				f_2x_{i-1}+f_i, \;\mbox{ if } i \mbox{ is odd} \end{cases} \;\; \mbox{for } 6 \leq i \leq 5+2k.
		\end{cases}$$
		Clearly, each point $F$ with $f_1=f_2=0$ is a 0-dimensional $\K$-orbit. Otherwise, $f_1^2+f_2^2 \neq 0$, the coordinates $x^* _3$ and $x^* _4$ always run over two real lines. So we have
		\begin{itemlist}
			\item If $f_1 \neq 0$ and $f_2 = 0$, then $\Omega_F$ is a $2$-dimensional plane defined by the equations $x^* _1 = f_1, x^* _2 = 0, x^* _5 = f_5, \ldots, x^* _{5+2k} = f_{5 + 2k}$.
			\item If $f_2 \neq 0$, $\Omega_F$ is the hyperplane given by the equation $x^* _2 = f_2$ in $\G^* \equiv \R^{5+2k}$.
		\end{itemlist}
		
		\item Case $G=G_{6+2k,1}$, we have
		$$\begin{cases} x^* _1=f_1e^{x_3}\\
			x^* _2=f_2\\
			x^* _3= \begin{cases} -f_1\frac{x_1(e^{x_3}-1)}{x_3}-f_2x_4 +f_3 \;\, \mbox{ if } x_3 \neq 0\\
				-f_1x_1-f_2x_4+f_3 \qquad \quad \mbox{ if }x_3=0 \end{cases}
			\\
			x^* _4=f_2x_3 +f_4\\
			
			x^* _i= \begin{cases} -f_2x_{i+1}+f_i, \; \mbox{ if } i \mbox{ is odd}\\
				f_2x_{i-1}+f_i, \;\mbox{ if } i \mbox{ is even} \end{cases} \;\; \mbox{for } 5 \leq i \leq 6+2k.
		\end{cases}$$
		Hence each point $F$ with $f_1=f_2=0$ is also a 0-dimensional $\K$-orbit. Otherwise, $f_1^2+f_2^2 \neq 0$, then  $\Omega_F$ is always contained in the hyperplane $\{X^*: x^*_2 = f_2\}$. Namely, by setting $s=x_3$, we have
		\begin{multline*}
			\Omega_F= \{ X^*: x^*_1 = f_1 e^s, \, x^*_2 = f_2, \, x^*_4 = f_2 s + f_4, \, x^*_j = f_2 t_j + f_j; \\ x^*_3, \, s, \, t_j \in \R, j = 5, \ldots, 6+2k \}.
		\end{multline*}
		Now, by removing the parameters we get
		\begin{itemlist}
			\item If $f_1 \neq 0 = f_2$ then $\Omega_F$ is just a $2$-dimensional half-plane
			\[\{X^*: x^*_2 = 0, \, x^*_4 = f_4, \, f_1 x^*_1 > 0, x^*_j = f_j; \, j = 3, 5, \ldots, 6+2k\}.\]
			\item If $f_2 \neq 0$ then 
			$\Omega_F$ is a $(4+2k)$-dimensional cylinder or $(4+2k)$-dimensional plane given by equations: 
			$x^*_2 = f_2, \, x^*_1 e^{-\frac{x^*_4}{x^*_2}} = f_1 e^{-\frac{f_4}{f_2}}$.
		\end{itemlist}
		
		\item Case $G=G_{6+2k,2}$, we have
		$$\begin{cases} x^* _1=f_1+f_2x_3\\
			x^* _2=f_2\\
			x^* _3= -f_1x_4-f_2(\frac{1}{2}x_3x_4+x_1) +f_3 \\
			x^* _4=f_1x_3 +\frac{1}{2}f_2x_3^2+f_4\\
			
			x^* _i= \begin{cases} -f_2x_{i+1}+f_i, \; \mbox{ if } i \mbox{ is odd}\\
				f_2x_{i-1}+f_i, \;\mbox{ if } i \mbox{ is even} \end{cases} \;\; \mbox{for } 5 \leq i \leq 6+2k.  
		\end{cases} $$ 
		Hence each point $F$ with $f_1=f_2=0$ is also a 0-dimensional $\K$-orbit. Otherwise, $f_1^2+f_2^2 \neq 0$, then $\Omega_F$ is always contained in the hyperplane $\{X^*: x^*_2 = f_2\}$. Namely, by setting $s=x_3$, we have
		\begin{multline*}
			\Omega_F= \{ X^*: x^*_1 = f_1 + f_2 s, \, x^*_2 = f_2, \, x^*_4 = f_1 s + \frac{1}{2} f_2 s^2 + f_4, \, x^*_j = f_2 t_j + f_j; \\ x^*_3, \, s, \, t_j \in \R, j = 5, \ldots, 6+2k \}.
		\end{multline*}
		Now, by removing the parameters we get
		\begin{itemlist}
			\item If $f_2= 0 \neq f_1$ then 
			$\Omega_F$ is just a $2$-dimensional plane
			\[\{ x^*_1 = f_1, \, x^*_2 = 0, x^*_j = f_j; j = 3, 4, 5, \ldots, 6+2k\}.\] 
			
			\item If $f_2 \neq 0$ then 
			$\Omega_F$ is a $(4+2k)$-dimensional parabolic cylinder given by the equations $x^*_2 = f_2, \, {x^*_1}^2 - 2x^*_2 x^*_4 = {f_1}^2 - 2f_2 f_4$.
		\end{itemlist}
		The proof is complete.
	\end{itemlist} 
\end{proof}

\begin{remark}
	By looking at the picture of $\K$-orbits of considered Lie groups, we have some geometrical characteristics as follows.
	\begin{romanlist}[(ii)]
		\item Because $G$ is exponential for each Lie group $G$ from  $G_{3,1}$, $G_{3,2}$, $G_{3,3(\lambda > 0)}$, $G_{4,1}$, $G_{4,2}$, $G_{4,3}$, $G_{5+2k}$, $G_{6+2k, 1}$, $G_{6+2k, 2}$, all $\K$-orbits of $G$ are connected, simply connected submanifolds of $\G^* \equiv \R^n$. Moreover, non-trivial $\K$-orbits are all homeomorphic to Euclidean spaces (see \cite{Kir76}). It can be also easily verified by using the picture of $\K$-orbits in Theorem \ref{thm20}. In particular, this property is obvious for $G_{3,3(\lambda = 0)}$ and $G_{4,4} = \widetilde{\Af(\C)}$.
		
		\item For each group $G \in \{G_{3,1}, G_{3,2},  G_{3,3(\lambda)}, G_{4,1}, G_{4,2}, G_{4,4}\}$, there are exactly two types of $\K$-orbits: either $0$-dimensional (trivial) or maximal dimensional. They are called MD-groups in term of D. N. Do \cite{Die99}. Moreover, the family of maximal dimension of each $G$ always forms a measurable foliation in term of Connes \cite{Con82}. This foliation is called MD-{\it foliation} assocated to $G$. In 1990, the second author studied the topology of MD-foliations assocated to all MD-groups of dimension $4$ and desribed the Connes' $C^*$-algebras of these MD-foliations (see \cite{Vu90,V93,Die99}).
		
		\item The only $4$-dimensional case that is not an MD-group is the group $G_{4,3}$. However, since its maximal dimensional $\K$-orbits are exactly two half-spaces it is too simple. Therefore, we do not need to study this group further. 
		
		\item In the general case of the dimension $n \geq 5$, three groups $G_{5+2k}, G_{6+2k, 1}$ and $G_{6+2k,2}$ are not MD-groups. For them, there are exactly three types of $\K$-orbits: either dimension $0, 2$ or maximal dimension. Similar to MD-groups, we will show in the next subsection that the family of maximal dimensional $\K$-orbits of each of the groups $G_{5+2k}, G_{6+2k, 1}$ and $G_{6+2k, 2}$ forms a measurable foliation.
	\end{romanlist}
\end{remark}

\subsection{Foliations formed by the maximal dimensional  $\K$-orbits}

We will establish in this subsection the remaining new results of the paper on the foliations formed by the maximal dimensional $\K$-orbits of each Lie group $G$ of genenral dimension, i.e. $G \in \{G_{5+2k}, G_{6+2k, 1},  G_{6+2k, 2}\}$. 

\begin{theorem}\label{FormedFoliation}
	For any group $G \in \{G_{5+2k}, G_{6+2k, 1},  G_{6+2k, 2}\}$, assume that $\mathcal{F}_G$ is the family of all maximal dimensional $\K$-orbits of $G$ and $V_G : = \cup \{\Omega: \Omega \in \mathcal{F}_G\}$. Then $V_G$ is an open submanifold of $\G^* \equiv \R^n$ and $\mathcal{F}_G$ forms a measurable foliation of dimension $(4+2k)$ on $V_G$ in the sense of Connes \cite{Con82} and this foliation $(V_G, \mathcal{F}_G)$ is called the general {\em MD}-foliation ({\em GMD}-foliation, for short) associated to $G$.	
\end{theorem}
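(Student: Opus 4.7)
The plan is to exploit the explicit description of the maximal-dimensional $\K$-orbits obtained in Theorem \ref{thm20}. In all three cases those orbits attain the common dimension $4+2k$ precisely when $f_2 \neq 0$, so I would set
\[
V_G := \{X^* \in \G^* \equiv \R^n : x^*_2 \neq 0\},
\]
which is open in $\R^n$, hence an $n$-dimensional submanifold, and observe that $\mathcal{F}_G$ is a partition of $V_G$.

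Next, I would realise $\mathcal{F}_G$ as the fibres of an explicit smooth submersion $\pi_G : V_G \to W_G$ built from the $\K$-invariants read off from Theorem \ref{thm20}, namely
\[
\pi_{G_{5+2k}}(X^*) = x^*_2,\; \pi_{G_{6+2k,1}}(X^*) = \bigl(x^*_2,\, x^*_1 e^{-x^*_4/x^*_2}\bigr),\; \pi_{G_{6+2k,2}}(X^*) = \bigl(x^*_2,\, (x^*_1)^2 - 2 x^*_2 x^*_4\bigr).
\]
A direct Jacobian computation shows that each $\pi_G$ is a submersion on $V_G$, and by Theorem \ref{thm20} its fibres coincide with the maximal-dimensional $\K$-orbits. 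Hence $\ker d\pi_G$ is a smooth integrable subbundle of $TV_G$ of constant rank $4+2k$ whose maximal connected integral submanifolds are exactly the members of $\mathcal{F}_G$; this is the subbundle defining the foliation $(V_G, \mathcal{F}_G)$ of dimension $4+2k$.

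To establish measurability I would apply Remark \ref{R222} and exhibit a pair $(X, \mu)$. Take $\mu$ to be the restriction of Lebesgue measure on $\R^n$ to $V_G$. For $G_{5+2k}$ the globally defined, nowhere-vanishing, leaf-tangent $(4+2k)$-vector field
\[
X := \partial_{x^*_1} \wedge \partial_{x^*_3} \wedge \partial_{x^*_4} \wedge \cdots \wedge \partial_{x^*_{5+2k}}
\]
does the job: in the foliation chart with transverse coordinate $x^*_2$, Lebesgue splits as $\mu_N \times \mu_\Pi$ and $\mu_X = \mu_\Pi$, so $\mu$ is $X$-invariant. For $G_{6+2k,1}$ and $G_{6+2k,2}$ the naive coordinate wedge fails because the second transverse coordinate is non-linear, so I would replace $\partial_{x^*_4}$ by the leaf-tangent fields $\partial_{x^*_4} + (x^*_1/x^*_2)\,\partial_{x^*_1}$ and $x^*_2\,\partial_{x^*_1} + x^*_1\,\partial_{x^*_4}$ respectively (each annihilates the corresponding second invariant) and wedge with $\partial_{x^*_3}, \partial_{x^*_5}, \ldots, \partial_{x^*_{6+2k}}$ to produce the required nowhere-vanishing section of $C^\infty(\Lambda^{4+2k}(\mathcal{F}_G))^+$.

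The main obstacle is verifying $X$-invariance of $\mu$ for $G_{6+2k,1}$ and $G_{6+2k,2}$: one must compute the non-trivial Jacobian of the change of variables from $(x^*_1, x^*_2, x^*_4)$ to the adapted transverse/leaf coordinates and check that $\iota_X \mu$ matches the plaque Lebesgue density after this change. My expectation is that the explicit $X$ above are chosen exactly to cancel this Jacobian, so that $\mu$ factorises on each foliation chart into a product of transversal and plaque measures; once this is in place, Remark \ref{R222} delivers the transverse measure $\Lambda_G$ and shows that $(V_G, \mathcal{F}_G, \Lambda_G)$ is a measurable foliation in the sense of Connes.
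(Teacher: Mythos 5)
Your overall strategy coincides with the paper's: the same open set $V_G=\{x^*_2\neq 0\}$, the foliation read off from the explicit orbit equations of Theorem \ref{thm20}, and measurability via Remark \ref{R222} using Lebesgue measure together with a leafwise $(4+2k)$-vector field. Your Step 1 (realising $\mathcal{F}_G$ as the fibres of the submersions $\pi_G$) is a clean substitute for the paper's route, which instead writes down differential systems $S_G$ explicitly and integrates their flows to check that each orbit is a maximal connected integral submanifold; the two are equivalent, and your submersion picture is essentially what the paper uses later for the topological classification. The polyvector fields you propose agree with the paper's $\X_G$ up to the leafwise-constant factor $x^*_2$, so up to equivalence of pairs this is the same candidate.

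The gap is in the last step, which you explicitly leave as an ``expectation''. For $G_{5+2k}$ the check is immediate, and for $G_{6+2k,2}$ it goes through: contracting the Lebesgue volume with your field gives $\pm\frac{1}{2}\,dx^*_2\wedge d\bigl((x^*_1)^2-2x^*_2x^*_4\bigr)$, which is constant along leaves. But for $G_{6+2k,1}$ the expectation fails as stated: in the adapted coordinates $(u,v)=(x^*_2,\;x^*_1e^{-x^*_4/x^*_2})$ the Lebesgue density acquires the factor $e^{x^*_4/x^*_2}$, which is \emph{not} leafwise constant (only $x^*_2$ is fixed on a leaf, while $x^*_4$ runs over $\R$), and contracting Lebesgue with your field yields $\pm e^{x^*_4/x^*_2}\,du\wedge dv$, which is not a basic form. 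So the proposed pair does not directly verify $X$-invariance; one must either rescale the field or the measure by $e^{-x^*_4/x^*_2}$ (or, more simply, invoke your own fibration $\pi_G$ and pull back any measure on the base), or argue as the paper does: the fields generating $S_G$ are infinitesimal generators of the coadjoint action, each $\K_g=\Ad^*_{g^{-1}}$ is a linear map of $\G^*$ and hence has constant Jacobian, and $G$ acts transitively on every leaf, which reduces the invariance of the pair to a statement about the group action rather than a foliation-chart computation. You correctly identified the obstacle, but the cancellation you are counting on does not occur for $G_{6+2k,1}$, so this step needs to be completed by one of the routes above.
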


\begin{proof}
	The proof is analogous to the case of MD-groups in \cite{Vu90,V93,VH09,VHT14}. Recall that $4+2k$ is always the maximal dimension of each $\K$-orbit of the family $\mathcal{F}_G$. For any $G$ in the considered set, we first need build a suitable differential system $S_G$ of rank $4+2k$ on the manifold $V_G$ such that each $\K$-orbit $\Omega$ from ${\mathcal{F}}_{G}$ is a maximal connected integrable submanifold corresponding to this system. As the next step, we have to show that the Lebegues measure is invariant for a smooth polyvector field $\X$ of degree $4+2k$ such that it generates $S_G$.
	\vskip0.5cm
	\noindent {\it Step 1: Prove that $(V_G, \mathcal{F}_G)$ is a foliation for each $G \in \{G_{5+2k}, G_{6+2k,1}, G_{6+2k,2}\}$}
	\vskip0.2cm
	Firstly, we consider the simplest case $G = G_{5+2k}$. Recall that for any $F(f_1,\ldots, f_n) \in \G^*$ ($n = 5+2k$), by Item (\ref{Item3}) of Theorem \ref{thm20}, the $\K$-orbits $\Omega_F$ belongs to $\mathcal{F}_G$ if and only if $f_2 \neq 0$, namely, $\Omega_F$ is a hyperplane as follows
	\noindent	
	\begin{equation}
		\Omega_F = \{ X^*: x^*_2 = f_2, \, x^*_1, \, x^*_3, \, x^*_4, \, x^*_j \in \R; \, \forall j = 5, \ldots, 5 + 2k\}.\label{E5} 
	\end{equation} 
	It is clear that $V_G = \cup \{\Omega: \Omega \in \mathcal{F}_G\} \equiv \R \times (\R\setminus \{0\}) \times \R^{n-2}$ and it is open submanifold of $\G^* \equiv \R^n$. In particular, the dimension of submanifold $V_G$ is exaxlty $\dim \G^* = n = 5+2k$. On  
	$V_G$ we consider the differential system 
	\[S_{5+2k} : = \{\X_1 = \frac{\partial}{\partial x^*_1}, \, \X_j = \frac{\partial}{\partial x^*_j}; \,  j = 3, 4, 5, \ldots, 5+2k \}\]
	contained $4+2k$ vector fields. Obviously, $\Rank(S_{5+2k})=4+2k = n - 1$ and $\X_1, \, \X_j$ are smooth over $V_G$, $j = 3, 4, 5, \ldots 5+2k$. Because each $\Omega$ from $\mathcal{F}_G$ is a hyperplane given by Eq. (\ref{E5}) so it is obvious a maximal connected integrable submanifold corresponding to $S_{5+2k}$. In other words, $S_{5+2k}$ generates $\mathcal{F}_{G}$ and $(V_G,\, \mathcal{F}_{G})$ is a $(4+2k)$-dimensional foliation for the case $G = G_{5+2k}$.
	
	Nextly, we consider the case $G = G_{6+2k, 1}$ (i.e. $n = 6+2k$). For any $F(f_1,\ldots, f_n) \in \G^*$, by Item (\ref{Item4b}) of Theorem \ref{thm20}, the $\K$-orbits $\Omega_F$ belongs to $\mathcal{F}_G$ if and only if $f_2 \neq 0$. Furthermore, we have
	\noindent	
	\begin{equation}
		\Omega_F=\{ X^*: x^*_2 = f_2, \, x^*_1 e^{-\frac{x^*_4}{x^*_2}} = f_1 e^{-\frac{f_4}{f_2}}, \, x^*_j \in \R;\, j = 3, 5, \ldots, 6+2k\}.\label{E61} 
	\end{equation}
	It is clear that $V_G = \cup \{\Omega: \Omega \in \mathcal{F}_G\} \equiv \R \times (\R\setminus \{0\}) \times \R^{n-2}$ and it is open submanifold of $\G^* \equiv \R^n$. On the open submanifold 
	$V_G$ 
	we consider the differential system $S_{6+2k, 1} : = \{\X_1, \, \X_3,\, \X_j; \,  j = 5, \ldots, 6+2k \}$ contained $4+2k$ vector fields as follows 
	\[\begin{cases} \X_1:=x^*_1 \frac{\partial}{\partial x^*_1}+ x^*_2 \frac{\partial}{\partial x^*_4}, \, \X_3: =\frac{\partial}{\partial x^*_3}, \\
		\X_j: =\frac{\partial}{\partial x^*_j}; \; j = 5, \ldots, 6+2k.  
	\end{cases}\]
	Obviously, $\Rank(S_{6+2k,1})=4+2k = n - 2$ and $\X_1, \, \X_3, \, \X_j$ are smooth over $V_G$, $j = 5, \ldots 6+2k$. We will show that $S_{6+2k,1}$ produces $\mathcal{F}_{G}$, i.e. show that each $\K$-orbit $\Omega$ from ${\mathcal{F}}_{G}$ is a maximal connected integrable submanifold corresponding to $S_{6+2k,1}$.
	
	Now we consider vector fields  
	$\X_3 : = \frac{\partial}{\partial x^*_3}$ and $\X_j: = \frac{\partial}{\partial x^*_j}$ ($j = 5, \ldots, 6+2k$). Clearly, the flows (i.e. one-parameter subgroups) of them are determined as follows
	\[\theta^{\X_3}_{x^*_3-f_3}: F \mapsto \theta^{\X_3}_{x^*_3-f_3}(F):=(f_1, \, f_2, \, x^*_3, \, f_4, \, f_5, \ldots, \, f_{6+2k}),\]
	\[\theta^{\X_j}_{x^*_j-f_j}: F \mapsto \theta^{\X_j}_{x^*_j-f_j}(F):=(f_1, \ldots, f_{j-1}, \, x^*_j, \, f_{j+1}, \ldots, f_{6+2k});\, j = 5, \ldots, 6+2k.\]
	
	\noindent Next, we consider $\X_1:= x^*_1 \frac{\partial}{\partial x^*_1}+ x^*_2 \frac{\partial}{\partial x^*_4}$. Assume that 
	\[\varphi: s \mapsto \varphi(s)=\bigl(x^*_1(s), \, \ldots, \, x^*_{6+2k}(s), \bigr); s \in \R\]
	is an integral curve of $\X_1$ passing through $F=\varphi(0)$. 
	Hence, we have
	\begin{align}
		\varphi'(s)={\X_1}_{\varphi(s)} \Leftrightarrow & \sum_{j=1}^{6+2k}x^{*'}_j(s)\frac{\partial}{\partial x^*_i} 
		= x^*_1 \frac{\partial}{\partial x^*_1}+ x^*_2 \frac{\partial}{\partial x^*_4} \nonumber\\
		\Leftrightarrow &  \begin{cases}
			x^{*'}_2(s) = 0, \, x^{*'}_3(s) = 0, \, x^{*'}_j(s)= 0; j = 5, \ldots, 6+2k\\
			x^{*'}_1(s)= x^*_1(s)\\
			x^{*'}_4(s)= x^*_2(s)
		\end{cases} \nonumber\\
		\Leftrightarrow &  \begin{cases}
			x^*_2, x^*_3, x^*_j \, \mbox{ are constant functions}, \, j = 5, \ldots, 6+2k\\
			x^{*}_1(s)= constant \times e^s \\ x^{*}_4(s)= x^*_2 s + constant. \label{X2} 
		\end{cases}
	\end{align}
	Combining with condition $F=\varphi(0)$, Eq. (\ref{X2}) gives us
	\[x^*_1= f_1 e^s, \, x^*_4= x^*_2 s + f_4, \, x^*_3 = f_3, \, x^*_j=f_j; \, j = 5, \ldots, 6+2k.\]
	Therefore, the flow of $\X_1$ is
	\[\theta^{\X_1}_{x^*_1-f_1}: F \mapsto \theta^{\X_1}_{x^*_1-f_1}(F):=(f_1 e^s, \, f_2, \, f_3, \, f_2 s + f_4, \, f_5, \dots, \, f_{6+2k}).\]
	By setting $\theta=\theta^{\X_{6+2k}}_{x^*_{6+2k}-f_{6+2k}}\circ \ldots \circ \theta^{\X_5}_{x^*_5-f_5}\circ\theta^{\X_3}_{x^*_3-f_3}\circ\theta^{\X_1}_{x^*_1-f_1}$, we~have
	\begin{align*}
		\theta(F) = & \, \theta^{\X_{6+2k}}_{x^*_{6+2k}-f_{6+2k}}\circ \ldots \circ \theta^{\X_5}_{x^*_5-f_5}\circ\theta^{\X_3}_{x^*_3-f_3}\circ\theta^{\X_1}_{x^*_1-f_1}(F)\\
				= &\, X^*(x^*_1, x^*_2, x^*_3, x^*_4, x^*_5, \ldots, x^*_{6+2k})
	\end{align*}
	where $x^*_3, x^*_j \in \R;\, j = 5, \ldots, 6+2k$ and 
	\[x^*_1= f_1 e^s, \, x^*_ 2 = f_2 , \, x^*_4 = f_2 s + f_4; s \in \R.\] 
	Removing the parameters we get
	\[\{\theta(F) = X^*: x^*_2 = f_2, \, x^*_1 e^{-\frac{x^*_4}{x^*_2}} = f_1 e^{-\frac{f_4}{f_2}}, \, x^*_3, x^*_j \in \R;\, j = 5, \ldots, 6+2k\} \equiv \Omega_F.\] 
	In other words, $\Omega_F$ is a maximal connected integrable submanifold corresponding to $S_{6+2k,1}$. Therefore $S_{6+2k,1}$ generates $\mathcal{F}_{G}$ and $(V_G, \mathcal{F}_{G})$ is a $(4+2k)$-dimensional foliation for the case $G = G_{6+2k,1}$. 
	
	Finally, we consider the case $G = G_{6+2k, 2}$ (i.e. $n = 6+2k$). For any $F(f_1,\ldots, f_n) \in \G^*$, by Item (\ref{Item4b}) of Theorem \ref{thm20}, the $\K$-orbits $\Omega_F$ belongs to $\mathcal{F}_G$ if and only if $f_2 \neq 0$. Furthermore, we have
	\[\Omega_F= \{ X^*: x^*_2 = f_2, \, {x^*_1}^2 - 2 x^*_2 x^*_4 = {f_1}^2 - 2 f_2 f_4; \\ x^*_3, \, x^*_j \in \R, j = 5, \ldots, 6+2k \}.\]
	Of course, $V_G = \cup \{\Omega: \Omega \in \mathcal{F}_G\} \equiv \R \times (\R\setminus \{0\}) \times \R^{n-2}$ and it is open submanifold of $\G^* \equiv \R^n$. On the open submanifold 
	$V_G$ 
	we consider the differential system $S_{6+2k, 2} : = \{\X_1, \, \X_3,\, \X_j; \,  j = 5, \ldots, 6+2k \}$ contained $4+2k$ vector fields as follows 
	\[\begin{cases} \X_1:=x^*_2 \frac{\partial}{\partial x^*_1}+ x^*_1 \frac{\partial}{\partial x^*_4}, \, \X_3: =\frac{\partial}{\partial x^*_3}, \\
		\X_j: =\frac{\partial}{\partial x^*_j}; \; j = 5, \ldots, 6+2k.  
	\end{cases}\]
	By calculations quite similar to the case of group $G_{6+2k,1}$, we can verify that $\Rank(S_{6+2k,1})$ is equal to $4+2k = n - 2$ and $S_{6+2k,2}$ produces $\mathcal{F}_{G}$. Therefore $(V_G, \mathcal{F}_G)$ is a foliation on the open submanifold $V_G$ and the proof of the first step is complete.
	\vskip0.5cm
	\noindent {\it Step 2: Prove that the foliation $(V_G,\mathcal{F}_{G})$ is measurable in the sense of Connes for any $G$ from $G_{5+2k},\, G_{6+2k,1}, \,G_{6+2k,2}$}
	\vskip0.2cm
	We now turn to the second step of the proof. Namely, we have to show that the foliation $(V_G,\mathcal{F}_{G})$ is measurable in the sense of Connes. By Remark \ref{R222}, we only need to choose a suitable pair ($\X_G, \mu _G$) on $V_G$ where $\X_G$ is a smooth polyvector field of degree $4+2k$ defined on $V_G$, $\mu _G$ is a measure on $V_G$ such that $\X$ generates $S_G$ and $\mu _G$ is $\X _G$-invariant. 
	We choose $\mu_G$ to be exactly the Lebegues measure on $V_G$ for any $G \in \{G_{5+2k}, \, G_{6+2k, 1}, \, G_{6+2k,2}\}$.
	For $\X _G$, we set
	\[\X_G : = \begin{cases} \X_1 \wedge \X_3 \wedge \X_4 \wedge \X_5 \wedge \ldots \wedge \X_{5+2k} \qquad {\mbox{if}} \; \; G = G_{5+2k},\\
		\X_1 \wedge \X_3 \wedge \X_5 \wedge \X_6 \wedge \ldots \wedge \X_{6+2k} \qquad {\mbox{if}} \; \; G \in \{G_{6+2k,1}, \, G_{6+2k,2}\}.
	\end{cases}\]
	For any $G \in \{G_{5+2k}, \, G_{6+2k,1}, \, G_{6+2k,2}\}$, it is clear that $\X_G$ is a polyvector field of degree $4+2k$, smooth, non-zero everywhere on $V_G$ and $\X_G$ generates $S_G$. 
	That is, when choosing on $(V_G, \mathcal{F}_G)$ a suitable orientation, then $\X_G \in C^{\infty}{\bigl({\Lambda}^{4+2k}(\mathcal{F})\bigr)}^{+}$. It is obvious that the invariance of the Lebegues measure $\mu_G$ with respect to $\X_G$ is equivalent to the invariance of $\mu_G$ for the $\K$-representation that is restricted to the foliated submanifold $V_G$ in $\G^*$. For any $X(x_1, x_2, x_3, x_4, x_5, \ldots, x_n) \in \G$, direct computation show that Jacobi's determinant
	$J_X$ of differential mapping $\K_{\exp(X)}$ is a constant which depends only on $X$ but do not depend on the coordinates of any point which moves in each $\K$-orbit $\Omega \in \mathcal{F}_G$. In other words, the Lebegues measure $\mu_G$ is $\X_G$-invariant. The proof is complete.
\end{proof}

\begin{theorem}[Description and classification of topological types of GMD-foliations]\label{TopoTypes}
	The topology of  {\em GMD}-foliations in Theorem \ref{FormedFoliation} has the following properties.
	\begin{romanlist}[(ii)]
		\item Three {\em GMD}-foliations $(V_{G_{5+2k}}, \mathcal{F}_{G_{5+2k}})$, $(V_{G_{6+2k,1}}, \mathcal{F}_{G_{6+2k,1}})$ and $(V_{G_{6+2k,2}}, \mathcal{F}_{G_{6+2k,2}})$ determine exactly two types of topology. The single foliation $(V_{G_{5+2k}}, \mathcal{F}_{G_{5+2k}})$ gives the first type and we denote it by $\mathcal{F}_{5+2k}$. Two remaining foliations $(V_{G_{6+2k,1}}, \mathcal{F}_{G_{6+2k,1}}), (V_{G_{6+2k,2}}, \mathcal{F}_{G_{6+2k,2}})$ are topologically equivalent and they give the second type which is denoted by $\mathcal{F}_{6+2k}$. 
		\item The type $\mathcal{F}_{5+2k}$ is a trivial fibration with connected and simply connected fibers on $\R^*: = \R\setminus \{0\}$. 
		\item The type $\mathcal{F}_{6+2k}$ is a trivial fibration with connected and simply connected fibers on $\R^* \times \R$.
	\end{romanlist}	
\end{theorem}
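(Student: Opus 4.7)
I will realize each of the three GMD-foliations as a globally trivial fiber bundle with the leaves as fibers, from which the topological types and equivalences in Items (i)--(iii) follow immediately. Throughout, $F(f_1,\dots,f_n) \in \G^* \equiv \R^n$ as usual.

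For $(V_{G_{5+2k}},\mathcal{F}_{G_{5+2k}})$, Theorem \ref{thm20} shows that on $V_{G_{5+2k}} \equiv \R \times \R^* \times \R^{n-2}$ the leaves are the hyperplanes $\{x^*_2 = a\}$, $a \in \R^*$. The projection $(x^*_1,\dots,x^*_n) \mapsto x^*_2$ therefore realizes $V_{G_{5+2k}}$ as the product $\R^* \times \R^{4+2k}$ whose fibers are exactly the leaves. This proves Item (ii): a trivial fibration over $\R^*$ with connected and simply connected fiber $\R^{4+2k}$.

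For $(V_{G_{6+2k,i}},\mathcal{F}_{G_{6+2k,i}})$, $i\in\{1,2\}$, Theorem \ref{thm20} shows that the leaves are parameterized by two numerical invariants, namely $(x^*_2,\, x^*_1 e^{-x^*_4/x^*_2})$ when $i=1$ and $(x^*_2,\, (x^*_1)^2 - 2x^*_2 x^*_4)$ when $i=2$, both taking values in $\R^* \times \R$. I will define explicit trivializations $\Phi_i : V_{G_{6+2k,i}} \to (\R^* \times \R) \times \R^{4+2k}$ by
\[
\Phi_1(x^*) := \bigl(x^*_2,\; x^*_1 e^{-x^*_4/x^*_2};\; x^*_4,\, x^*_3,\, x^*_5,\dots,x^*_{6+2k}\bigr),
\]
\[
\Phi_2(x^*) := \bigl(x^*_2,\; (x^*_1)^2 - 2x^*_2 x^*_4;\; x^*_1,\, x^*_3,\, x^*_5,\dots,x^*_{6+2k}\bigr).
\]
Their inverses recover $x^*_1 = c\, e^{x^*_4/x^*_2}$ and $x^*_4 = ((x^*_1)^2 - c)/(2x^*_2)$ respectively; since $x^*_2$ never vanishes on $V_G$, both $\Phi_i$ and $\Phi_i^{-1}$ are continuous (in fact smooth). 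Each $\Phi_i$ carries leaves onto the fibers $\{(a,c)\}\times \R^{4+2k}$, so both foliations are trivial bundles over $\R^* \times \R$ with connected and simply connected fiber $\R^{4+2k}$, which proves Item (iii). The composition $h := \Phi_2^{-1}\circ \Phi_1$ is then a homeomorphism $V_{G_{6+2k,1}} \to V_{G_{6+2k,2}}$ sending leaves to leaves, giving the topological equivalence required for the single type $\mathcal{F}_{6+2k}$ in Item (i).

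To complete Item (i), I observe that $\mathcal{F}_{5+2k}$ and $\mathcal{F}_{6+2k}$ cannot be topologically equivalent, because their foliated manifolds $V_{G_{5+2k}}$ and $V_{G_{6+2k,i}}$ have different dimensions $5+2k$ and $6+2k$, so no homeomorphism between them exists. The main (and essentially only) obstacle is bookkeeping: verifying that $\Phi_1,\Phi_2$ are genuine homeomorphisms whose fibers are precisely the leaves. This reduces to inspection of the explicit orbit formulas from Theorem \ref{thm20}, and no further conceptual difficulty arises beyond the non-vanishing of $x^*_2$ on $V_G$, which makes the two invariant functions smooth submersions onto $\R^* \times \R$.
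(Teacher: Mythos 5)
Your proposal is correct, and its overall strategy (a dimension count to separate the two topological types, the invariants $x^*_2$ and $\bigl(x^*_2,\,x^*_1e^{-x^*_4/x^*_2}\bigr)$ or $\bigl(x^*_2,\,(x^*_1)^2-2x^*_2x^*_4\bigr)$ as bundle projections over $\R^*$ and $\R^*\times\R$) matches the paper's. The one genuine difference is how the leaf-preserving homeomorphism between $(V_{6+2k},\mathcal{F}_{G_{6+2k,1}})$ and $(V_{6+2k},\mathcal{F}_{G_{6+2k,2}})$ is produced: the paper writes down a single explicit map $h$ with $\widetilde{x^*_1}:=x^*_1+x^*_4$ and $\widetilde{x^*_4}:=\frac{1}{2x^*_2}\bigl[(x^*_1+x^*_4)^2-x^*_1e^{-x^*_4/x^*_2}\bigr]$ and asserts it is a homeomorphism, whereas you obtain $h=\Phi_2^{-1}\circ\Phi_1$ as a composition of two global trivializations, each of which is manifestly a diffeomorphism because $x^*_1$ (resp.\ $x^*_4$) is recovered from the retained coordinates by an explicit smooth formula. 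Your version is actually the safer one: the paper's map is not injective as written (fixing $x^*_2=1$ and $x^*_1+x^*_4=0$, the value $x^*_1e^{-x^*_4}=-x^*_4e^{-x^*_4}$ is attained twice for suitable $x^*_4>0$, so distinct points share the same image), so the identity it verifies between the defining equations of $L$ and $\widetilde{L}$ does not by itself yield a topological equivalence. Your route repairs this at no extra cost and simultaneously delivers Items (ii) and (iii), since the first two components of $\Phi_1$ and $\Phi_2$ are exactly the trivial fibrations over $\R^*\times\R$ with fiber $\R^{4+2k}$.
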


\begin{proof}
	We now prove three assertions of the theorem. 
	\begin{itemlist}
		\item Firstly, we prove Item (i) of the theorem. It is obvious that 
		$(V_{G_{5+2k}}, {\mathcal{F}}_{G_{5+2k}})$ is not topologically equivalent to any foliation from $\{(V_{G_{6+2k,1}}, {\mathcal{F}}_{G_{6+2k, 1}})$, $(V_{G_{6+2k,2}}, {\mathcal{F}}_{G_{6+2k, 2}})\}$ because $V_{G_{5+2k}}$ is $(5+2k)$-dimensional manifold while both of $V_{G_{6+2k, 1}}$ and $V_{G_{6+2k, 2}}$ are $(6+2k)$-dimensional manifolds.
		
		Now we will prove that two foliations $\{(V_{G_{6+2k,1}}, {\mathcal{F}}_{G_{6+2k, 1}}), \, (V_{G_{6+2k,2}}, {\mathcal{F}}_{G_{6+2k, 2}})\}$ are topologically equivalent. Note that 
		\[ V_{G_{6+2k, 1}} \equiv V_{G_{6+2k, 2}} \equiv \R \times \R\setminus \{0\} \times \R^{4+2k}\]  
		and we will denote both of them by $V_{6+2k}$ for convenience. By Definition \ref{ToPo}, to prove the topological equivalence of two foliations which are given on the same foliated manifold $V_{6+2k}$, we need to find a homeomorphism $h$ of $V_{6+2k}$ such that $h$ sends leaves of the first foliation onto those of the second one. \\
		Let $h: V_{6+2k} \rightarrow V_{6+2k}, \, X^*(x^*_1, \ldots, x^*_{6+2k}) \mapsto h(X^*) : = \widetilde{X^*}(\widetilde{x^*_1}, \dots, \widetilde{x^*_{6+2k}})$ which is defined as follows:
		\[ 
		\begin{cases} \widetilde{x^*_1} := x^*_1 + x^*_4, \, \widetilde{x^*_2} := x^*_2, \, \widetilde{x^*_4} := \frac{1}{2 x^*_2} \Bigl[{\bigl(x^*_1 + x^*_4\bigr)}^2 - x^*_1 e^{-\frac{x^*_4}{x^*_2}} \Bigr] ,\\
			\widetilde{x^*_3} := x^*_3, \, \widetilde{x^*_j} := x^*_j; \, j = 5, \ldots, 6+2k.
		\end{cases}
		\]
		It is clear that the considered map $h$ is homeomorphic. 	
		Now, we take an arbitrary leaf $L$ of $(V_{6+2k}, \mathcal{F}_{G_{6+2k,1}})$. In fact, $L$ is determined as follows
		\[L  = \{X^* \in V_{6+2k}: x^*_2 = c_2, \, x^*_1 e^{-\frac{x^*_4}{x^*_2}} = c_1; \, x^*_3, \, x^*_j \in \R; j = 5, \ldots, 6+2k\}\]
		where $c_1, c_2$ are some real constants, $c_2 \neq 0$. For $(V_{6+2k}, \mathcal{F}_{G_{6+2k,2}})$, we consider the leaf $\widetilde{L}$ determined as follows
		\[
		\widetilde{L} = \{ \widetilde{X^*} \in V_{6+2k}: \, \widetilde{x}^*_2 = c_2, \, {\widetilde{x}^* _1}{}^2 - 2 \widetilde{x}^*_2 \widetilde{x}^*_4 = c_1; \, \widetilde{x}^*_3, \, \widetilde{x}^*_j \in \R; j = 5, \ldots, 6+2k\}.
		\]
		Now we look back at the homeomorphism $h: V_{6+2k} \to V_{6+2k}$. It is plain that 
		\begin{align} \notag X^* \in L \Leftrightarrow & \,\,  x^*_2 = c_2, \, x^*_1 e^{-\frac{x^*_4}{x^*_2}} = c_1, \, x^*_3, \, x^*_j \in \R; j = 5, \ldots, 6+2k\\		
			\notag \Leftrightarrow & \,\, \widetilde{x}^*_2 = c_2, \, {\widetilde{x}^* _1}{}^2 - 2 \widetilde{x}^*_2 \widetilde{x}^*_4 = c_1, \, \widetilde{x}^*_3, \, \widetilde{x}^*_j \in \R; j = 5, \ldots, 6+2k\\
			\notag \Leftrightarrow &  \, \, h_1(X^*) = \widetilde{X^*} \in \widetilde{L}.
		\end{align}		 
		In other words, $h(L) = \widetilde{L}$ for any $L$ of $(V_{6+2k}, \mathcal{F}_{G_{6+2k,1}})$. Therefore, two foliations $(V_{6+2k}, \mathcal{F}_{G_{6+2k,1}}), (V_{6+2k}, \mathcal{F}_{G_{6+2k,2}})$ are topological equivalent and they determine the second type $\mathcal{F}_{6+2k}$. Item (i) of the theorem is proven.
		
		\item Now we prove Item (ii) of Theorem \ref{TopoTypes}. Note that all leaves of GMD-foliation $(V_{G_{5+2k}}, \mathcal{F}_{G_{5+2k}})$ are hyperplanes of the form 
		\[ \{X^*: x^*_2 = c; x^*_1, x^*_, x^*_4, x^*_j \in \R; j = 5, \ldots, 5+2k\}\]
		parameterized by the non-zero constant $x^*_2 = c \in \R^*$. Therefore the foliation of the first type $\mathcal{F}_{5+2k}$ come from the following fibration 
		\[p_{5+2k}: V_{G_{5+2k}} \rightarrow \R^*; \, X^*(x^*_1, \, x^*_2, \, \ldots,\, x^*_{5+2k}) \mapsto p_{5+2k} (X^*): = x^*_2.\]
		
		\item Finally, we prove Item (iii) of Theorem \ref{TopoTypes} for the second type $\mathcal{F}_{6+2k}$. Of course, we only need to consider the foliation $(V_{6+2k}, \mathcal{F}_{G_{6+2k,2}})$ instead of both ones of this type.
		
		Let $p_{6+2k}: V_{6+2k} \rightarrow \R^* \times \R$ be the map that defines as follows
		\[X^*(x^*_1, x^*_2, \ldots, x^*_{6+2k}) \mapsto p_{6+2k} (X^*):= (x^*_2, \, {x^*_1} ^2 - 2x^*_2 x^*_4).\]
		It can be verified that $p_{6+2k}$ is a submersion and $p_{6+2k}: V_{6+2k} \rightarrow \R^* \times \R$ is a fibration on $\R^* \times \R$ with connected (and simply connected) fibers. Moreover, each leaf of the foliaion $(V_{6+2k}, \mathcal{F}_{G_{6+2k,2}})$ is exactly one fiber of this fibration. In other world, $(V_{6+2k}, \mathcal{F}_{G_{6+2k,2}})$ comes from this fibration.	
	\end{itemlist}
	The proof is complete.
\end{proof}

As an immediate consequence of Theorem \ref{TopoTypes} and the result of Connes in \cite{Con82}, we have the following result.

\begin{corollary}\label{C312}
	The Connes' $C^*$-algebras of {\em GMD}-foliations in Theorem \ref{TopoTypes} are determined as follows
	\begin{romanlist}[(ii)]
		\item $C^*(\mathcal{F}_{5+2k}) \cong C_0(\R^*) \otimes \mathcal{K} \cong \bigl(C_0(\R) \oplus C_0(\R) \bigr) \otimes \mathcal{K}$
		\item $C^*(\mathcal{F}_{6+2k}) \cong C_0(\R^* \times \R) \otimes \mathcal{K} \cong \bigl(C_0(\R^2) \oplus C_0(\R^2) \bigr) \otimes \mathcal{K}$
	\end{romanlist}	
	where $C_0(X)$ is the algebra of continuous complex-valued functions defined on an arbitrary locally compact space $X$ vanishing at infinity and $\mathcal{K}$ denotes the $C^*$-algebra of compact operators on an (infinite dimensional) separable Hibert space.
\end{corollary}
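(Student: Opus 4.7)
The plan is to derive Corollary \ref{C312} directly from Theorem \ref{TopoTypes} by invoking Connes' result on $C^*$-algebras of foliations arising from fibrations. Recall from \cite{Con82} that if a foliation $(V, \mathcal{F})$ arises from a smooth fibration $p : V \to B$ whose fibers are connected and simply connected (i.e.\ each leaf of $\mathcal{F}$ coincides with one fiber), then the holonomy groupoid of $(V, \mathcal{F})$ is equivalent to the base space $B$, and consequently the Connes' $C^*$-algebra of $(V, \mathcal{F})$ is isomorphic to $C_0(B) \otimes \mathcal{K}$, where $\mathcal{K}$ denotes the compact operators on a separable infinite-dimensional Hilbert space. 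This is the single external ingredient I will quote.

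First I would handle $\mathcal{F}_{5+2k}$. By Item (ii) of Theorem \ref{TopoTypes}, the foliation $(V_{G_{5+2k}}, \mathcal{F}_{G_{5+2k}})$ is a trivial fibration over $\R^*$ with connected and simply connected fibers; therefore the quoted fact of Connes yields $C^*(\mathcal{F}_{5+2k}) \cong C_0(\R^*) \otimes \mathcal{K}$. To reach the second form, observe that $\R^* = \R \setminus \{0\}$ is the disjoint union of the two open connected components $(-\infty, 0)$ and $(0, +\infty)$, each homeomorphic to $\R$. Since $C_0$ of a disjoint union of locally compact spaces is the direct sum of the respective $C_0$-algebras, this gives $C_0(\R^*) \cong C_0(\R) \oplus C_0(\R)$, and tensoring with $\mathcal{K}$ produces the stated formula in (i).

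Secondly I would treat $\mathcal{F}_{6+2k}$ in exactly the same manner. By Item (i) of Theorem \ref{TopoTypes}, the two foliations $(V_{G_{6+2k,1}}, \mathcal{F}_{G_{6+2k,1}})$ and $(V_{G_{6+2k,2}}, \mathcal{F}_{G_{6+2k,2}})$ are topologically equivalent, so their Connes' $C^*$-algebras coincide (since topological equivalence of foliations preserves the holonomy groupoid up to an equivalence inducing a Morita equivalence of the associated $C^*$-algebras, and thus an isomorphism after stabilization by $\mathcal{K}$). By Item (iii) of the same theorem, this common type $\mathcal{F}_{6+2k}$ comes from a trivial fibration over $\R^* \times \R$ with connected and simply connected fibers, hence $C^*(\mathcal{F}_{6+2k}) \cong C_0(\R^* \times \R) \otimes \mathcal{K}$. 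Decomposing $\R^* \times \R$ as the disjoint union of the two half-planes $(-\infty,0) \times \R$ and $(0,+\infty) \times \R$, each homeomorphic to $\R^2$, yields $C_0(\R^* \times \R) \cong C_0(\R^2) \oplus C_0(\R^2)$, giving (ii).

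The proof is essentially a direct transcription of Theorem \ref{TopoTypes} through Connes' theorem, so there is no serious obstacle; the only point deserving care is to verify that the fibration structure obtained in Theorem \ref{TopoTypes} has fibers that are exactly the leaves (not merely unions of leaves), which is already ensured because both fibrations $p_{5+2k}$ and $p_{6+2k}$ were constructed so that their fibers coincide one-to-one with the $\K$-orbits of maximal dimension. Given this, invoking Connes' identification and the elementary decomposition of $C_0$ on a disconnected space completes the argument.
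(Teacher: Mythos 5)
Your proposal is correct and follows exactly the route the paper intends: the paper gives no separate proof, stating only that the corollary is ``an immediate consequence of Theorem \ref{TopoTypes} and the result of Connes in \cite{Con82},'' and your argument simply makes that derivation explicit via Connes' identification $C^*(V,\mathcal{F}) \cong C_0(B) \otimes \mathcal{K}$ for foliations coming from fibrations with connected, simply connected fibers, together with the decomposition of $\R^*$ (resp.\ $\R^* \times \R$) into two components each homeomorphic to $\R$ (resp.\ $\R^2$).
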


\section{Conclusion}\label{sec:4}

In the paper, we have considered all non $2$-step nilpotent Lie algebras in the class {\Li} being indecomposable and all connected and simply connected Lie groups corresponding to them. The main results of the paper are as follows: First, we give the upper bound for the minimal degree of a faithful representation for considered algebras. Second, we give the geometric picture of $\K$-orbits of corresponding groups. Third, we proved that, for any considered Lie group $G$ of dimension $n$ in general cases of $n \geq 5$, the family of all maximal dimensional K-orbits forms measurable foliations (in the sense of Connes). We call them GMD-foliations. Finally, the topological classification of all GMD-foliations is given and their Connes' $C^*$-algebras are described. Furthermore, the proposed method can be applied to other Lie groups corresponding to the remaining algebras, i.e. $2$-step nilpotent ones, in {\Li}.


\end{document}